\newtheorem{thm}{Theorem}[section]
\newtheorem{lem}[thm]{Lemma}
\newtheorem{prop}[thm]{Proposition}
\newtheorem{defn}[thm]{Definition}
\newtheorem{cor}[thm]{Corollary}
\newtheorem{rem}[thm]{Remark}
\newtheorem{question}[thm]{Question}
\newtheorem{prob}[thm]{Problem}
\newcommand{\Z}{\mathbb{Z}}
\newcommand{\F}{\mathbb{F}}
	\def\MR#1{}
\title{The EKR-module property of pseudo-Paley graphs of square order}
\author{Shamil Asgarli}
\address{Department of Mathematics and Computer Science \\ Santa Clara University \\ 500 El Camino Real \\ USA 95053}
\email{sasgarli@scu.edu}
\author{Sergey Goryainov}
\address {School of Mathematical Sciences \\ Hebei Key Laboratory of Computational Mathematics and Applications \\ Hebei Normal University\\Shijiazhuang  050024\\ P.R. China}
\email{sergey.goryainov3@gmail.com}
\author[03]{Huiqiu Lin}
\address{Department of Mathematics\\ East China University of Science and Technology\\ Shanghai 200237\\ P.R. China}
\email{huiqiulin@126.com}
\author{Chi Hoi Yip}
\address{Department of Mathematics \\ University of British Columbia \\ 1984 Mathematics Road \\ Vancouver  V6T 1Z2 \\ Canada}
\email{kyleyip@math.ubc.ca}
\date{\today}
\keywords{Paley graph; maximum clique; Erd\"os-Ko-Rado theorem; orthogonal array; strongly regular graph; Hadamard matrix}
\subjclass[2020]{Primary 05C25, 05B15; Secondary 05C69, 05E30, 11T30, 51E15}
\begin{document}

\maketitle

\begin{abstract}
We prove that a family of pseudo-Paley graphs of square order obtained from unions of cyclotomic classes satisfies the Erd\H{o}s-Ko-Rado (EKR) module property, in a sense that the characteristic vector of each maximum clique is a linear combination of characteristic vectors of canonical cliques. This extends the EKR-module property of Paley graphs of square order and solves a problem proposed by Godsil and Meagher. Different from previous works, which heavily rely on tools from number theory, our approach is purely combinatorial in nature. The main strategy is to view these graphs as block graphs of orthogonal arrays, which is of independent interest.
\end{abstract}

\section{Introduction}

Throughout the paper, let $p$ be an odd prime, $q$ a power of $p$. Let $\F_q$ be the finite field with $q$ elements, $\F_q^+$ be its additive group, and $\F_q^*=\F_q \setminus \{0\}$ be its multiplicative group. 

Given an abelian group $G$ and a connection set $S \subset G \setminus \{0\}$ with $S=-S$, the {\em Cayley graph} $\operatorname{Cay}(G,S)$ is
the undirected graph whose vertices are elements of $G$, such that two vertices $g$ and $h$ are adjacent if and only if $g-h \in S$. A {\em clique} in a graph $X$ is a subset of vertices of $X$ such that any two of them are adjacent. For a graph $X$, the {\em clique number} of $X$, denoted $\omega (X)$, is the size of a maximum clique of $X$.

Given any graph $X$ for which we can describe its canonical cliques (that is, typically cliques with large size and simple structure), we can ask whether $X$ has any of the following three related Erd\H{o}s-{K}o-{R}ado (EKR) properties; see Section~\ref{subsect:EKR} for more background and connection to other EKR-type results.

\begin{itemize}
    \item EKR property: the clique number of $X$ equals the size of canonical cliques.
    \item EKR-module property: the characteristic vector of each maximum clique in $X$ is a $\mathbb{Q}$-linear combination of characteristic vectors of canonical cliques in $X$. 
    \item strict-EKR property: each maximum clique in $X$ is a canonical clique. 
\end{itemize}

Consider the Paley graph $P_{q^2}$ which is the Cayley graph defined on the additive group of $\F_{q^2}$, with the connection set being the set of squares in $\F_{q^2}^*$. Clearly, the subfield $\F_q$ forms a clique. Moreover, $a\F_q+b$ also forms a clique for each $a, b \in \F_{q^2}$ where $a$ is a nonzero square. Such square translates of $\F_q$ are the canonical cliques \cite[Section 5.9]{GM} in this example. Blokhuis proved that these are precisely the maximum cliques in $P_{q^2}$.

\begin{thm}[\cite{Blo84}] \label{EKR}
Let $q$ be an odd prime power. The Paley graph $P_{q^2}$ satisfies the strict-EKR property.
\end{thm}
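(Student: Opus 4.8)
The plan is first to pin down $\omega(P_{q^2})$, then to use the automorphisms of $P_{q^2}$ to normalize an arbitrary maximum clique, and finally to prove a rigidity statement forcing a normalized maximum clique to be a subfield coset. For the clique number: every $a\in\F_q^\ast$ is a square in $\F_{q^2}$, since $a^{(q^2-1)/2}=(a^{q-1})^{(q+1)/2}=1$, so all differences within the subfield $\F_q$ are squares and $\F_q$ is a clique of size $q$; conversely $P_{q^2}$ is strongly regular with least eigenvalue $-(q+1)/2$, so the Hoffman ratio bound gives $\omega(P_{q^2})\le 1+\frac{(q^2-1)/2}{(q+1)/2}=q$. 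Now let $C$ be any maximum clique and fix distinct $c_0,c_1\in C$; since $c_1-c_0$ is a nonzero square, $x\mapsto (c_1-c_0)^{-1}(x-c_0)$ is an automorphism of $P_{q^2}$ carrying $C$ to a maximum clique containing $0$ and $1$. Such affine maps are automorphisms and permute the canonical cliques, and the only canonical clique through both $0$ and $1$ is $\F_q$ itself, so it suffices to prove: every maximum clique $C$ with $0,1\in C$ equals $\F_q$.

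To translate the hypothesis, fix $\theta$ with $\theta^2=t$ for a fixed non-square $t\in\F_q$, so $\F_{q^2}=\{x+y\theta:x,y\in\F_q\}$, the norm is $N(x+y\theta)=x^2-ty^2$, and the quadratic character of $\F_{q^2}$ equals $\psi\circ N$ with $\psi$ the quadratic character of $\F_q$; hence $x+y\theta$ is a nonzero square in $\F_{q^2}$ exactly when $x^2-ty^2$ is a nonzero square in $\F_q$. Writing $c=(x_c,y_c)\in\F_q^2$ for $c\in C$, the clique condition reads: $(x_c-x_{c'})^2-t(y_c-y_{c'})^2$ is a nonzero square in $\F_q$ for all distinct $c,c'$. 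Since $t$ is a non-square, no two elements of $C$ share a first coordinate, so $C$ is the graph $\{(a,\varphi(a)):a\in\F_q\}$ of a function $\varphi:\F_q\to\F_q$ whose every difference quotient $\frac{\varphi(a)-\varphi(b)}{a-b}$ lies in $T:=\{w\in\F_q:1-tw^2\ \text{is a nonzero square in}\ \F_q\}$, and a short Gauss-sum computation gives $|T|\le (q+1)/2$. So $C$ is a $q$-point subset of $\mathrm{AG}(2,q)$ determining at most $(q+1)/2$ directions, all lying in the very structured set $T$; to finish, we must show $\varphi$ is affine, for then $C$ is a coset of a one-dimensional $\F_q$-subspace, hence (being a clique) a canonical clique, hence $C=\F_q$ by the normalization.

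This last step is the crux and the main obstacle. In Blokhuis's approach one argues directly with the separable polynomial $f(X)=\prod_{c\in C}(X-c)\in\F_{q^2}[X]$ of degree $q$: the square-difference hypothesis says $f'(c)=\prod_{c'\ne c}(c-c')$ is a nonzero square for every root $c$, and, writing $X^{q^2}-X=f(X)g(X)$, so is $g(c)=\prod_{a\notin C}(c-a)$; a polynomial-arithmetic argument — exploiting that $f'$ becomes a square in $\F_{q^2}[X]/(f)$, that $\deg f'\le q-2$ automatically in characteristic $p$, and careful degree bookkeeping — then pins $f$ down to $f(X)=X^q+\alpha X+\beta$, whose roots form a coset of a one-dimensional $\F_q$-subspace. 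Modern alternatives are to invoke the theory of directions (R\'edei, Megyesi, Blokhuis–Ball–Brouwer–Storme–Szőnyi, Ball), by which a $q$-set in $\mathrm{AG}(2,q)$ with few directions must be the graph of an additive, $\F_{p^s}$-linear function, after which the arithmetic of $T$ rules out every $s$ except the one giving $\F_q$-linearity; or to note that $\sum_{x\in\F_{q^2}}\chi(f(x))$ is forced to its Weil bound and appeal to the classification of extremal curves $y^2=f(x)$ over $\F_{q^2}$. The genuine difficulty, and what makes the rigidity nontrivial, is that for $q$ a proper prime power the direction count $|T|\le(q+1)/2$ alone does \emph{not} imply ``line'' (graphs of non-$\F_q$-linear additive functions can determine few directions), so one must really use either Blokhuis's polynomial identity or the extra arithmetic built into $T$; everything before this point — the clique number, the normalization, and the bound on $|T|$ — is routine.
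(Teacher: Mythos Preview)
The paper does not contain its own proof of this theorem: Theorem~\ref{EKR} is quoted from Blokhuis~\cite{Blo84} as background, and the surrounding discussion explicitly says that all known proofs require ``advanced tools such as the polynomial method over finite fields''. So there is no in-paper argument to compare against; the relevant question is simply whether your proposal is a proof.

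It is not. Your write-up is an accurate roadmap of the known approaches, but it stops precisely at the point where the actual work begins. The preliminary steps are fine: the Hoffman bound gives $\omega(P_{q^2})\le q$, the affine normalization reduces to showing that a maximum clique through $0$ and $1$ equals $\F_q$, and the reformulation via $\F_{q^2}=\F_q\oplus\theta\F_q$ with the norm form is standard. But at the ``crux'' you do not carry out any argument; you \emph{describe} three possible routes (Blokhuis's polynomial identity for $f(X)=\prod_{c\in C}(X-c)$, the R\'edei--Ball direction theory, or an extremal Weil-bound classification) and, in your own words, acknowledge that the direction bound $|T|\le(q+1)/2$ by itself does not force a line when $q$ is a proper prime power. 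Saying that ``a polynomial-arithmetic argument \ldots\ then pins $f$ down to $f(X)=X^q+\alpha X+\beta$'' is a citation, not a proof; the degree bookkeeping you allude to is exactly the nontrivial content of \cite{Blo84}, and none of it appears here.

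In short: your outline matches the literature (and hence the result the paper cites), but the decisive rigidity step is asserted rather than proved. If you want this to stand as a proof you must actually execute one of the three options you list---most directly, show from the hypothesis that $f'(c)$ is a nonzero square for every root $c$ that $f'$ is a perfect square modulo $f$ in $\F_{q^2}[X]$ and then derive the shape $X^q+\alpha X+\beta$ by the degree argument.
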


Godsil and Meagher \cite[Section 5.9]{GM} call Theorem~\ref{EKR} the EKR theorem for Paley graphs. Theorem~\ref{EKR} was first proved by Blokhuis \cite{Blo84}. Extensions and generalizations of Theorem~\ref{EKR} can be found in \cite{BF91, Szi99, NM, AY2, AY}. A Fourier analytic approach was recently proposed in \cite[Section 4.4]{Yip4}. While we have at least three different proofs of Theorem~\ref{EKR}, all known proofs relied heavily on advanced tools such as the polynomial method over finite fields.

Instead, in this paper, we will follow a purely combinatorial approach. Although we are not able to give a simple proof of Theorem~\ref{EKR}, we prove that a weaker version of Theorem~\ref{EKR} extends to a larger family of Cayley graphs, namely Peisert-type graphs; see Theorem~\ref{thm: main}. Peisert-type graphs were first introduced explicitly in \cite[Definition 1.1]{AY}, but can be dated back to \cite{BWX}; see the discussion before Corollary~\ref{cor: srg}. See Lemma~\ref{Ptexample} for examples of such graphs: Paley graphs, Peisert graphs, and their generalizations. The following definition differs slightly from \cite[Definition 1.1]{AY} which had a slightly stronger hypothesis $m\leq \frac{q+1}{2}$.

\begin{defn}[Peisert-type graphs]\label{defn:peisert-type}
Let $q$ be an odd prime power. Let $S \subset \F_{q^2}^*$ be a union of $m \leq q$ cosets of $\F_q^*$ in $\F_{q^2}^*$ such that $\F_q^* \subset S$, that is, 
\begin{equation}\label{coset}
S=c_1\F_q^* \cup c_2\F_q^* \cup \cdots \cup c_m \F_q^*.
\end{equation}
Then the Cayley graph $X=\operatorname{Cay}(\F_{q^2}^+, S)$ is said to be a Peisert-type graph of type $(m,q)$. A clique in $X$ is called a \emph{canonical clique} if it is the image of the subfield $\F_q$ under an affine transformation.
\end{defn}

In this paper, we discuss the three different notions of EKR properties mentioned above in the context of Peisert-type graphs. For any Peisert-type graph $X$ of type $(m,q)$, we always assume $m \leq q$. Note that the subfield $\F_q$ forms a clique in $X$. In particular, $\omega(X)\geq q$. The hypothesis that $m\leq q$ is crucial for the equality $\omega(X)=q$ to hold as can be seen from the proof of Theorem~\ref{thm:peisert-type-chromatic}. Hence, Peisert-type graphs have the EKR-property. We refer to other known EKR-related properties of Peisert-type graphs in Section~\ref{subsect:PTgraph}.

Blokhuis' theorem already implies that Paley graphs of square order enjoy the EKR-module property. In their book, Godsil and Meagher ask for an algebraic proof of this statement \cite[Problem 16.5.1]{GM}, which motivates this work.

Our main result in the present paper answers this problem for a larger family of Cayley graphs:

\begin{thm}\label{thm: main}
Peisert-type graphs satisfy the EKR-module property.
\end{thm}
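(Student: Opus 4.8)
The plan is to realise every Peisert-type graph $X=\operatorname{Cay}(\F_{q^2}^+,S)$, with $S=c_1\F_q^*\cup\cdots\cup c_m\F_q^*$, $\F_q^*\subset S$ and $m\le q$, as the block graph of an orthogonal array, and then to run a short spectral argument. For each $i$ the $1$-dimensional $\F_q$-subspace $c_i\F_q$ of the $2$-dimensional $\F_q$-space $\F_{q^2}$ partitions $\F_{q^2}$ into $q$ parallel cosets $K_{i,1},\dots,K_{i,q}$; each $K_{i,a}$ is an affine image of $\F_q$ and a clique (any two of its points differ by an element of $c_i\F_q^*\subset S$), and conversely these $mq$ sets are exactly the canonical cliques of $X$. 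Since $c_i\F_q\ne c_j\F_q$ for $i\ne j$, a coset of $c_i\F_q$ meets a coset of $c_j\F_q$ in precisely one point; thus the $m\times q^2$ array whose $i$-th row records, for each point of $\F_{q^2}$, the coset of $c_i\F_q$ containing it is an $\mathrm{OA}(m,q)$, and $X$ is its block graph. Writing $v_T$ for the characteristic (column) vector of $T\subseteq\F_{q^2}$ and $Q_i=\sum_{a=1}^{q}v_{K_{i,a}}v_{K_{i,a}}^{\top}$ for the ``same coset of $c_i\F_q$'' $0/1$ matrix, one has $A=\sum_{i=1}^{m}Q_i-mI$ for the adjacency matrix $A$, because no two distinct points lie in a common coset for two different $i$.

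I would next locate the characteristic vector of a maximum clique spectrally. Let $C$ be a clique with $|C|=\omega(X)=q$. If a line $\ell$ has direction different from $c_1,\dots,c_m$, then two points of $C$ on $\ell$ would differ by an element of $\F_{q^2}^*\setminus S$, so $|C\cap\ell|\le 1$ for every such $\ell$. There are $q+1-m\ge 1$ non-special directions (this is where $m\le q$ enters), so partitioning $C$ by the $q+1$ lines through a point $x\notin C$ shows that the $m$ special lines through $x$ together meet $C$ in at least $q-(q+1-m)=m-1$ points; hence $x$ has at least $m-1$ neighbours in $C$. A global count of the entries of $Av_C$ (using $A\mathbf{1}=m(q-1)\mathbf{1}$ and that a point of $C$ has exactly $q-1$ neighbours in $C$) shows the average over $x\notin C$ of the number of neighbours in $C$ is exactly $m-1$; combined with the pointwise bound, every $x\notin C$ has exactly $m-1$ neighbours in $C$. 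Therefore $Av_C=(q-m)v_C+(m-1)\mathbf{1}$, equivalently $w_C:=v_C-\tfrac1q\mathbf{1}$ lies in the $(q-m)$-eigenspace $V_{q-m}$ of $A$. (This is the equality case of the Delsarte--Hoffman ratio bound, obtained here combinatorially; note that the relevant eigenvalue is $q-m$, not the least eigenvalue $-m$.)

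It then remains to show that $V_{q-m}$ is spanned by the shifted canonical cliques $w_{K_{i,a}}:=v_{K_{i,a}}-\tfrac1q\mathbf{1}$. Put $W_i:=\operatorname{span}_{\mathbb{Q}}\{w_{K_{i,1}},\dots,w_{K_{i,q}}\}$; since the $K_{i,a}$ partition $\F_{q^2}$ we have $\sum_a w_{K_{i,a}}=0$, so $\dim W_i=q-1$. The one-point intersection property gives $\langle v_{K_{i,a}},v_{K_{j,b}}\rangle=1$ for $i\ne j$, whence $\langle w_{K_{i,a}},w_{K_{j,b}}\rangle=1-1-1+1=0$, i.e.\ $W_i\perp W_j$. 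Let $\Pi_i=\tfrac1q Q_i$, the orthogonal projection onto $\operatorname{span}\{v_{K_{i,a}}\}=\mathbb{Q}\mathbf{1}\oplus W_i$; restricted to $\mathbf{1}^\perp$ it equals the projection onto $W_i$, so on $\mathbf{1}^\perp$ the operator $\sum_{i=1}^{m}\Pi_i$ is the projection onto $W_1\oplus\cdots\oplus W_m$. Consequently $A=q\sum_i\Pi_i-mI$ acts on $\mathbf{1}^\perp$ with only the eigenvalues $q-m$ (on $W_1\oplus\cdots\oplus W_m$) and $-m$ (on its orthogonal complement in $\mathbf{1}^\perp$); for $m\ge 2$ these, together with the valency $m(q-1)$ on $\mathbb{Q}\mathbf{1}$, are pairwise distinct, so $V_{q-m}=W_1\oplus\cdots\oplus W_m=\operatorname{span}\{w_{K_{i,a}}\}$. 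Hence $w_C$ is a $\mathbb{Q}$-linear combination of the $w_{K_{i,a}}$, and since $\mathbf{1}=\sum_a v_{K_{1,a}}$ already lies in the $\mathbb{Q}$-span of the $v_{K_{i,a}}$, we get $v_C=w_C+\tfrac1q\mathbf{1}\in\operatorname{span}_{\mathbb{Q}}\{v_K:K\text{ a canonical clique}\}$, which is the EKR-module property. (The case $m=1$ is immediate: $X$ is then a disjoint union of cliques and every maximum clique is canonical.)

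The steps I expect to be delicate are: (i) checking cleanly that the canonical cliques are \emph{exactly} the cosets of the lines $c_i\F_q$ and that these endow $X$ with an honest $\mathrm{OA}(m,q)$ structure, including the identification $A=\sum_i Q_i-mI$ (the affine geometry of $\F_q$ inside $\F_{q^2}$ does the work, but must be set up carefully); and (ii) the assertion that the shifted canonical cliques span \emph{all} of $V_{q-m}$ rather than merely some subspace — this is precisely where the orthogonality $W_i\perp W_j$, a direct consequence of the one-point intersection axiom of an orthogonal array, is indispensable. Compared with the polynomial-method proofs of Theorem~\ref{EKR}, the main conceptual point is that one never needs to describe the non-canonical maximum cliques individually: ratio-bound tightness confines all of them to a single eigenspace, which the canonical cliques already fill out.
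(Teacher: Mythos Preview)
Your proof is correct and follows the same overall strategy as the paper: realise $X$ as the block graph of an $\mathrm{OA}(m,q)$, place the balanced characteristic vector of any maximum clique in the $(q-m)$-eigenspace, show that this eigenspace is spanned by balanced characteristic vectors of canonical cliques, and then add back the all-ones vector. Your execution differs only in presentation---you derive clique regularity by a direct line-through-a-point count rather than by invoking the Delsarte--Hoffman bound (Lemma~\ref{HoffmanBound}), and you read off the whole spectral decomposition from the identity $A=q\sum_i\Pi_i-mI$ together with $W_i\perp W_j$, whereas the paper exhibits explicit eigenfunctions $f_{r,i}$, checks the eigenvalue equation, and matches dimensions---but the underlying ideas coincide.
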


The main ingredient in the proof is the following connection between Peisert-type graphs and orthogonal arrays, which is of independent interest. 

\begin{thm}\label{thm: PT=OA}
Each Peisert-type graph of type $(m,q)$ can be realized as the block graph of an orthogonal array $OA(m,q)$. Moreover, there is a one-to-one correspondence between canonical cliques in the block graph and canonical cliques in a given Peisert-type graph.
\end{thm}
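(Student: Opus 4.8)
The plan is to exploit the fact that $\F_{q^2}$, regarded as a two-dimensional vector space over $\F_q$, is the point set of the affine plane $AG(2,q)$. First I would observe that the $q+1$ cosets of $\F_q^*$ in $\F_{q^2}^*$ are exactly the punctured one-dimensional $\F_q$-subspaces of $\F_{q^2}$; adjoining $0$ to each gives the $q+1$ ``directions'' of $AG(2,q)$, that is, its $q+1$ parallel classes of lines. Since $\F_q^*\subseteq S$, we may write
\[
S\cup\{0\}=L_1\cup L_2\cup\cdots\cup L_m,
\]
where $L_1=\F_q$ and $L_1,\dots,L_m$ are $m$ distinct one-dimensional $\F_q$-subspaces of $\F_{q^2}$; for each $i$ let $\PP_i=\{a+L_i:a\in\F_{q^2}\}$ be the corresponding parallel class of $q$ lines.

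Next I would build the orthogonal array explicitly: index the $q^2$ columns by the points of $\F_{q^2}$ and the $m$ rows by $i\in\{1,\dots,m\}$, and let the $(i,P)$-entry be the unique line of $\PP_i$ through $P$ (equivalently the coset $P+L_i\in\F_{q^2}/L_i$). To see that this is an $OA(m,q)$ of index $1$, note that each $\PP_i$ partitions $\F_{q^2}$ into $q$ lines of $q$ points, so each symbol occurs $q$ times in each row; and for $i\neq j$ a line of $\PP_i$ meets a line of $\PP_j$ in exactly one point, because $L_i\cap L_j=\{0\}$, which is precisely the strength-two, index-one requirement. Then I would identify the block graph of this array with $X$ on the common vertex set $\F_{q^2}$: two distinct points $P,P'$ agree in coordinate $i$ iff $P-P'\in L_i$, and they agree in at most one coordinate since $L_i\cap L_j=\{0\}$ for $i\neq j$; hence $P,P'$ are adjacent in the block graph iff $P-P'\in (L_1\cup\cdots\cup L_m)\setminus\{0\}=S$, i.e.\ iff they are adjacent in $X$.

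For the ``moreover'' part I would match the two notions of canonical clique. On the orthogonal-array side, a canonical clique of the block graph is a symbol class, i.e.\ the set of columns carrying a fixed symbol in a fixed row $i$; under our identification this is exactly a line of $\PP_i$. On the Cayley-graph side, a canonical clique of $X$ is an affine image $\alpha\F_q+\beta$ of the subfield that happens to be a clique; being a clique forces $\alpha\F_q^*\subseteq S$, and since $S$ is the disjoint union of the cosets $c_i\F_q^*$ this means $\alpha\F_q^*=c_i\F_q^*$ for some $i$, so $\alpha\F_q=L_i$ and $\alpha\F_q+\beta$ is a coset of $L_i$, i.e.\ again a line of $\PP_i$; conversely every line of some $\PP_i$ is such an affine image and is a clique. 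Thus both families coincide with the set of $mq$ lines lying in $\PP_1\cup\cdots\cup\PP_m$, and the asserted one-to-one correspondence is the identity on subsets of $\F_{q^2}$.

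I do not expect a serious obstacle: the whole argument is the dictionary between the field/coset language and the affine-plane language, and the verifications reduce to the elementary incidence facts that two parallel lines are disjoint while two non-parallel lines meet in one point. The steps that need a little care are fixing conventions so the parameters genuinely match an $OA(m,q)$ (this is where $m\le q$ is comfortably consistent, since $AG(2,q)$ supplies $q+1$ parallel classes), and confirming that the combinatorial definition of a canonical clique of a block graph used elsewhere in the paper is indeed the symbol-class definition; the hypothesis $\F_q^*\subseteq S$ is used only to ensure that $\F_q$ itself is among the canonical cliques, corresponding to the symbol class of row $1$.
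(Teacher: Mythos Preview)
Your proposal is correct and follows essentially the same approach as the paper: both interpret $\F_{q^2}$ as the affine plane $AG(2,q)$, identify the $m$ cosets in the connection set with $m$ parallel classes of lines, and realize the Peisert-type graph as the block graph of the resulting $OA(m,q)$, with canonical cliques on both sides matching the lines in those parallel classes. The only difference is cosmetic: the paper fixes coordinates by choosing $\alpha\in\F_{q^2}\setminus S$ and records entries as $y$-intercepts (slopes $v_i$), whereas you work coordinate-free with cosets $P+L_i$, which spares you the paper's separate treatment of the vertical direction but is the same construction in disguise.
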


We remark that the idea of viewing certain Cayley graphs geometrically has appeared in the past; see for example \cite[Construction 5.2.1]{NM} and \cite[Section 4.2]{AY} for related discussion. However, Paley graphs and block graphs of orthogonal arrays are often treated independently; see for example \cite[Chapter 10]{GR01}, \cite[Chapter 5]{GM}, and \cite[Section 5]{AFMNSR21}. The present paper is the first to make an explicit connection between Peisert-type graphs and orthogonal arrays: Theorem~\ref{thm: PT=OA} allows us to treat them in a uniform manner. We mention the following four additional applications below.

It is known that the block graph of an orthogonal array is strongly regular. Thus, Theorem~\ref{thm: PT=OA} also implies the same conclusion for the Peisert-type graphs. We remark that Peisert-type graphs in fact form a subfamily of a well-known family of strongly regular Cayley graphs defined on finite fields due to Brouwer, Wilson, and Xiang~\cite{BWX}: the connection set is a union of semi-primitive cyclotomic classes of $\F_{q^2}$. However, their proof heavily relied on the fact we can compute semi-primitive Gauss sums {\em explicitly} using Stickelberger's theorem and its variants; see \cite[Proposition 1]{BWX} and \cite[Corollary 3.6]{AY2}. We will see that Theorem~\ref{thm: PT=OA} can be proved using a purely combinatorial argument, thus giving an elementary proof of the corollary below.

\begin{cor}\label{cor: srg}
A Peisert-type graph of type $(m,q)$ is strongly regular with parameters
$
(q^2, m(q - 1), (m - 1)(m - 2) + q - 2, m(m - 1))
$
and eigenvalues
$k=m(q-1)$ (with multiplicity $1$), $-m$ (with multiplicity  $q^2-1-k$) and
$q-m$ (with multiplicity $k$). In particular, a Peisert-type graph of type $(\frac{q+1}{2},q)$ is a pseudo-Paley graph.
\end{cor}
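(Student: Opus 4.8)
The plan is to deduce the entire statement from Theorem~\ref{thm: PT=OA}. That theorem realizes a Peisert-type graph $X$ of type $(m,q)$ as the block graph $\Gamma$ of an orthogonal array $OA(m,q)$, and since strong regularity, the parameter set, and the spectrum (with multiplicities) are all isomorphism invariants, it suffices to verify the claims for $\Gamma$. Recall that the vertices of $\Gamma$ are the $q^2$ columns of the array and that two columns are adjacent when they agree in at least one coordinate. The basic structural fact I would isolate first is the \emph{index-one property}: because every ordered pair of symbols appears exactly once among any two fixed rows, two distinct columns can agree in \emph{at most} one coordinate; hence ``agree in at least one coordinate'' and ``agree in exactly one coordinate'' describe the same adjacency relation.

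Second, I would compute the parameters of $\Gamma$ by the standard double count from the theory of transversal designs and nets (alternatively, one may invoke the known fact, already recalled above, that block graphs of orthogonal arrays are strongly regular, together with the classical parameter formulas, e.g. \cite{GR01}*{Chapter 10}). Fixing a column $c$: in each of the $m$ rows exactly $q$ columns share the symbol that $c$ has there (a consequence of strength two), so $q-1$ of them are distinct from $c$, and by the index-one property these $m$ sets of neighbours are pairwise disjoint; thus $\Gamma$ has $q^2$ vertices and is $m(q-1)$-regular. For two distinct columns $c$ and $c'$, any common neighbour $d$ meets $c$ in a unique coordinate $a$ and $c'$ in a unique coordinate $b$, and one splits into the cases $a=b$ (possible only if $c$ and $c'$ already agree in coordinate $a$, contributing $q-2$ further columns) and $a\neq b$ (in which case $d$ is the \emph{unique} column prescribed by the entry of $c$ in row $a$ and the entry of $c'$ in row $b$). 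Tallying the possibilities gives $\lambda=(q-2)+(m-1)(m-2)$ when $c\sim c'$ and $\mu=m(m-1)$ when $c\not\sim c'$, which are exactly the asserted parameters.

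Third, I would read off the eigenvalues. For a strongly regular graph with parameters $(v,k,\lambda,\mu)$ the two non-principal eigenvalues are the roots of $x^2-(\lambda-\mu)x-(k-\mu)=0$. Here $\lambda-\mu=q-2m$ and $k-\mu=m(q-m)$, so the discriminant is $(q-2m)^2+4m(q-m)=q^2$, a perfect square; the roots are therefore $q-m$ and $-m$. Denoting their multiplicities by $f$ and $g$, the relations $f+g=q^2-1$ and $k+(q-m)f-mg=0$ (vanishing trace of the adjacency matrix) force $f=m(q-1)=k$ and $g=q^2-1-k$, which completes the first assertion of the corollary.

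Finally, for the last sentence I would set $m=\tfrac{q+1}{2}$ and check directly that the parameters become $\bigl(q^2,\tfrac{q^2-1}{2},\tfrac{q^2-5}{4},\tfrac{q^2-1}{4}\bigr)$: indeed $m(q-1)=\tfrac{q^2-1}{2}$, $(m-1)(m-2)+q-2=\tfrac{(q-1)(q-3)+4(q-2)}{4}=\tfrac{q^2-5}{4}$, and $m(m-1)=\tfrac{q^2-1}{4}$. These coincide with the parameters of the Paley graph $P_{q^2}$, so a Peisert-type graph of type $\bigl(\tfrac{q+1}{2},q\bigr)$ is a pseudo-Paley graph. I do not expect a genuine obstacle in any of this: all of the real content sits inside Theorem~\ref{thm: PT=OA}, and what is left is the classical determination of the parameters of an orthogonal-array block graph plus elementary algebra; the only place demanding a little care is fixing the correct conventions for the orthogonal array (strength two, index one), since it is precisely the index-one property that makes the block-graph adjacency relation behave well and yields the exact parameters and spectrum above.
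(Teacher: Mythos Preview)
Your proposal is correct and follows essentially the same route as the paper: the paper's proof is the single line ``This follows from Theorem~\ref{thm: PT=OA} and Theorem~\ref{thm: OAsrg},'' where Theorem~\ref{thm: OAsrg} is precisely the standard strongly-regular-graph parameter/spectrum computation for $X_{OA(m,n)}$ that you have written out by hand. Your verification of the pseudo-Paley parameters for $m=\tfrac{q+1}{2}$ is a welcome detail that the paper leaves implicit.
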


Paley graphs are known to have a close connection with Paley's construction on Hadamard matrices \cite{Paley}. Recently, Adm et al. \cite{AFMNSR21} introduced the notion of weakly Hadamard matrices and studied weakly Hadamard diagonalizable graphs (see Definition~\ref{defn: WHD} and \ref{defn: WHDG}). In particular, they showed that Paley graphs of square order are weakly Hadamard diagonalizable \cite[Theorem 5.9]{AFMNSR21}. The following theorem generalizes their result.

\begin{thm}\label{thm: PTWHD}
Peisert-type graphs are weakly Hadamard diagonalizable.
\end{thm}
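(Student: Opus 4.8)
The plan is to deduce the statement from the combinatorial model behind Theorem~\ref{thm: PT=OA}. Fix a Peisert-type graph $X=\operatorname{Cay}(\F_{q^2}^+,S)$ of type $(m,q)$. Each coset $c_i\F_q^*$ in \eqref{coset} is a punctured one-dimensional $\F_q$-subspace of $\F_{q^2}$, and viewing $\F_{q^2}$ as the affine plane $AG(2,q)$ over $\F_q$, the graph $X$ is precisely the point graph of this plane in which $m$ of the $q+1$ parallel classes of lines, say $D_1,\dots,D_m$, are declared \emph{active}: two points are adjacent in $X$ iff the line through them lies in some $D_i$ with $i\le m$. (This is the realization underlying Theorem~\ref{thm: PT=OA}.) Writing $A$ for the adjacency matrix of $X$, this says $A=\sum_{i=1}^{m}A_i$, where $A_i$ is the adjacency matrix of the graph joining points collinear along a line of $D_i$; that graph is a disjoint union of $q$ copies of $K_q$.

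Next I would exhibit an explicit $\{-1,0,1\}$-eigenbasis read off from the lines. Index the lines of each parallel class $D_i$ ($1\le i\le q+1$) as $\ell_{i,0},\ell_{i,1},\dots,\ell_{i,q-1}$ and, for $1\le j\le q-1$, set
\[
 u_{i,j}\colonequals \chi_{\ell_{i,j-1}}-\chi_{\ell_{i,j}},
\]
where $\chi_{\ell}$ is the characteristic vector of the point set of $\ell$. These vectors and the all-ones vector $\mathbf 1$ have entries in $\{-1,0,1\}$. The one fact needed is that a vector constant on the lines of $D_i$ with zero sum also has zero sum on every line of every other class, since two non-parallel lines meet in exactly one point; hence $A_iu_{i,j}=(q-1)u_{i,j}$ while $A_{i'}u_{i,j}=-u_{i,j}$ for $i'\ne i$. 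Therefore $Au_{i,j}=(q-m)u_{i,j}$ for an active class ($i\le m$), $Au_{i,j}=-m\,u_{i,j}$ for an inactive class ($i>m$), and $A\mathbf 1=m(q-1)\mathbf 1$. This lists $1+m(q-1)+(q+1-m)(q-1)=q^{2}$ vectors, with eigenvalue multiplicities exactly those of Corollary~\ref{cor: srg}.

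Finally I would check that the matrix $H$ whose columns are these $q^{2}$ vectors is a weakly Hadamard matrix diagonalizing $A$ (equivalently the Laplacian $L=m(q-1)I-A$, since $X$ is regular). Its entries lie in $\{-1,0,1\}$ by construction, so it remains to put the Gram matrix $H^{T}H$ in tridiagonal form. Vectors attached to distinct eigenvalues are orthogonal because $A$ is symmetric; $\mathbf 1$ is orthogonal to every $u_{i,j}$; and for $i\ne i'$ one computes $\langle u_{i,j},u_{i',j'}\rangle=1-1-1+1=0$, again from ``two non-parallel lines meet once''. Within one class, $\langle u_{i,j},u_{i,j'}\rangle$ equals $2q$, $-q$, or $0$ according as $j=j'$, $|j-j'|=1$, or $|j-j'|\ge 2$. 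Hence, ordering the columns of $H$ as $\mathbf 1$ followed by the blocks $u_{i,1},\dots,u_{i,q-1}$ one class at a time, $H^{T}H$ is block-diagonal with a single $1\times1$ block and $q$ irreducible tridiagonal blocks of size $q-1$, each equal to $q$ times the standard $(2,-1)$-tridiagonal matrix; in particular it is tridiagonal and nonsingular. Nonsingularity of the Gram matrix also certifies that the $q^{2}$ columns are linearly independent, so $H$ is invertible and $H^{-1}AH$ is diagonal; thus $X$ is weakly Hadamard diagonalizable.

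The step requiring the most care is this last Gram computation: one must use the ``telescoping'' difference vectors $u_{i,j}$ rather than a fixed-reference variant such as $\chi_{\ell_{i,0}}-\chi_{\ell_{i,j}}$ (which would make the within-class blocks dense), and one must exploit the affine-plane incidence pattern to see that eigenvectors from different parallel classes---active or inactive---are exactly orthogonal, and that the inactive classes supply precisely the missing $\{-1,0,1\}$ eigenvectors for the eigenvalue $-m$. A secondary point is to align the above with the precise wording of Definitions~\ref{defn: WHD} and~\ref{defn: WHDG} (diagonal versus tridiagonal Gram matrix, and whether a connected/irreducible pattern is demanded); should a connected tridiagonal pattern be required, one interleaves the per-class blocks before applying the incidence identities. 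The rest is the routine bookkeeping indicated above.
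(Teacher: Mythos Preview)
Your argument is correct, modulo a harmless miscount: there are $q+1$ parallel classes and hence $q+1$ tridiagonal blocks of size $q-1$ in $H^TH$, not $q$ (indeed $1+(q+1)(q-1)=q^2$). The paper's own proof is a two-line deduction: it observes that the orthogonal array $O'$ realizing $X$ in Theorem~\ref{thm: PT=OA} is by construction a subarray of $O_q=OA(q+1,q)$, hence extendible to $q+1$ rows, and then invokes Theorem~\ref{thm: WHD} (from \cite{AFMNSR21}) as a black box. What you have done is effectively unpack that black box in the affine-plane language: your inactive parallel classes $D_{m+1},\dots,D_{q+1}$ are precisely the extra rows witnessing extendibility, and your telescoping differences $u_{i,j}$ together with the Gram computation amount to reproving Theorem~\ref{thm: WHD} in this special case. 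So the two proofs rest on the same structural idea, but yours is self-contained and makes the weakly Hadamard matrix completely explicit, whereas the paper's is shorter and leverages the existing literature. Your closing caveat about interleaving blocks to force a connected tridiagonal pattern is unnecessary: Definition~\ref{defn: WHD} only demands that non-consecutive columns be orthogonal, not that consecutive ones be non-orthogonal, so a block-diagonal Gram matrix with tridiagonal blocks already qualifies.
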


Recall that the {\em chromatic number} of a graph $X$, denoted $\chi(X)$, is the smallest number of colors needed to color the vertices of $X$ so that no two adjacent vertices share the same color. We remark that one can prove the original EKR theorem using the (fractional) chromatic number of Kneser graphs \cite[Theorem 7.8.1]{GR01}. It is known that the chromatic number is bounded below by the clique number, that is, $\omega(X) \leq \chi(X)$. Broere, D\"oman, and Ridley \cite{BDR88} showed that if $d>1$ and $d \mid (q+1)$, then both the chromatic number and the clique number of $GP(q^2, d)$ is $q$; the graph $GP(q^2, d)$ is the $d$-Paley graph of order $q^2$, whose precise definition can be found in Section~\ref{subsect:PTgraph} below. The converse of this result was proved by Schneider and Silva \cite[Theorem 4.7]{SS15}; a stronger converse was proved recently in \cite{Yip4}. The following theorem computes both the chromatic and the clique number of all Peisert-type graphs, hence extending the first result since $GP(q^2, d)$ with $d\mid (q+1)$ is a Peisert-type graph by Lemma~\ref{Ptexample}.

\begin{thm}\label{thm:peisert-type-chromatic}
Let $X$ be a Peisert-type graph of order $q^2$. Then $\omega(X)=\chi(X)=q$. In particular, $X$ has the EKR property.
\end{thm}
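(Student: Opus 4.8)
The strategy is to pin both parameters between $q$ and $q$. Since $\omega(X)\le\chi(X)$ holds for every graph, it suffices to prove the two inequalities $\omega(X)\ge q$ and $\chi(X)\le q$; together these force $q\le\omega(X)\le\chi(X)\le q$, hence equality throughout.

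The bound $\omega(X)\ge q$ is immediate and was already noted in the introduction: the subfield $\F_q\subset\F_{q^2}$ is a clique, because any two distinct elements of $\F_q$ differ by an element of $\F_q^*\subset S$. For the reverse bound on the chromatic number, I would produce an explicit proper $q$-coloring. By Definition~\ref{defn:peisert-type}, $S$ is a union of $m$ cosets of $\F_q^*$ in $\F_{q^2}^*$, while the total number of such cosets is $(q^2-1)/(q-1)=q+1$; since $m\le q$, at least one coset, say $\F_q^* v$, is disjoint from $S$. Fix such a $v$ and let $W=\F_q v=\{tv:t\in\F_q\}$, a one-dimensional $\F_q$-subspace of $\F_{q^2}$ and hence an additive subgroup of $\F_{q^2}^+$ of order $q$ and index $q$. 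Each coset $a+W$ is an independent set of $X$: if $t\ne t'$ then $(a+tv)-(a+t'v)=(t-t')v\in\F_q^* v$, which avoids $S$, so $a+tv$ and $a+t'v$ are non-adjacent. Coloring each vertex by the coset of $W$ it lies in therefore uses exactly $q$ colors and is proper, giving $\chi(X)\le q$.

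Combining the two bounds yields $\omega(X)=\chi(X)=q$. Since every canonical clique is an affine image of $\F_q$ and so has size $q=\omega(X)$, the clique number equals the size of the canonical cliques, which is precisely the EKR property. I do not expect a genuine obstacle here; the only subtle point is that the hypothesis $m\le q$ is used in an essential way to find a coset of $\F_q^*$ missing $S$ — indeed, when $m=q+1$ one has $S=\F_{q^2}^*$ and $X=K_{q^2}$, so the conclusion fails. As an alternative to the coloring argument for the upper bound on $\omega(X)$, one could instead invoke the Delsarte--Hoffman ratio bound together with the strongly regular parameters from Corollary~\ref{cor: srg}: with $k=m(q-1)$ and least eigenvalue $-m$ it gives $\omega(X)\le 1-k/(-m)=q$; the coloring proof is preferable because it is elementary and delivers the chromatic number at the same time.
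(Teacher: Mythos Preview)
Your argument is correct. The lower bound $\omega(X)\ge q$ via the subfield clique matches the paper verbatim, and your explicit $q$-coloring by the cosets of $W=\F_q v$ for a coset $\F_q^* v$ disjoint from $S$ is valid: the difference of two elements in the same coset lies in $\F_q^* v\subset \F_{q^2}^*\setminus S$, so each coset is independent, and the sandwich $q\le\omega(X)\le\chi(X)\le q$ finishes it.

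The paper takes a less direct route for $\chi(X)\le q$: it invokes Theorem~\ref{thm: PT=OA} to identify $X$ with the block graph of a proper subarray $O'$ of the orthogonal array $O_q=OA(q+1,q)$, notes that $O'$ is therefore extendible, and then cites Theorem~\ref{thm: OAchi} (from Godsil--Royle) to conclude $\chi(X_{O'})=q$. Unwinding that citation, an extension of $O'$ by an unused row of $O_q$ is exactly a choice of direction $v$ with $\F_q^* v\cap S=\emptyset$, and the resulting color classes are precisely your cosets $a+\F_q v$; so the two proofs are the same combinatorics packaged differently. Your version has the advantage of being entirely self-contained and not relying on either the orthogonal-array correspondence or the extendibility criterion, while the paper's version emphasizes the unifying role of Theorem~\ref{thm: PT=OA}. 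Your closing remark about the Delsarte--Hoffman alternative for $\omega(X)\le q$ also matches one of the two options the paper records.
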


Theorem~\ref{thm: PT=OA} also implies the following corollary on the strict-EKR property of a special family of Peisert-type graphs, which includes Sziklai's theorem on generalized Paley graphs \cite{Szi99} in case of small edge density. Although the corollary is slightly weaker than \cite[Corollary 4.1]{AY}, the proof of this weaker result is much simpler. In contrast, the proof of the stronger result \cite[Corollary 4.1]{AY} relied on the characterization of the number of directions determined by the graph of a function \cite{Ball03} over finite fields, which was built on \cite{LR73,BBS,BBBSS}. 

\begin{cor}\label{cor: EKRmsmall}
If $q>(m-1)^2$, then all Peisert-type graphs of type $(m,q)$ satisfy the strict-EKR property. In particular, if $d > \frac{q+1}{\sqrt{q}+1}$ and $d\mid (q+1)$, then the $d$-Paley graph $GP(q^2, d)$ has the strict-EKR property.
\end{cor}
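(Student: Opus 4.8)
The plan is to reduce the strict-EKR property of a Peisert-type graph $X$ of type $(m,q)$ to a counting/combinatorial statement about the associated orthogonal array $OA(m,q)$ provided by Theorem~\ref{thm: PT=OA}, and then to exploit the sparseness condition $q > (m-1)^2$. First I would invoke Theorem~\ref{thm: PT=OA} to pass from $X$ to the block graph of an $OA(m,q)$, under which maximum cliques of $X$ correspond to maximum cliques of the block graph and canonical cliques correspond to canonical cliques. By Theorem~\ref{thm:peisert-type-chromatic}, every maximum clique has size exactly $q$, so a maximum clique in the block graph is a set of $q$ blocks of the array that pairwise agree in some coordinate. There are two flavours of maximum clique in a block graph of an $OA(m,q)$: the $m$ "canonical" parallel classes (all blocks sharing a prescribed value in a fixed coordinate), and possibly "sporadic" cliques where no single coordinate is common to all $q$ blocks. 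The strict-EKR property is exactly the assertion that when $q > (m-1)^2$ no sporadic clique exists.

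The key step is the non-existence of sporadic maximum cliques. Suppose $\mathcal{C}$ is a clique of size $q$ in the block graph that is not canonical. Pick any block $B \in \mathcal{C}$. Each of the other $q-1$ blocks in $\mathcal{C}$ agrees with $B$ in at least one of the $m$ coordinates, so by pigeonhole some coordinate $i$ is shared by $B$ and at least $\lceil (q-1)/(m-1) \rceil$ other blocks of $\mathcal{C}$; fix the value $a = B_i$. Let $\mathcal{C}_a$ be the (canonical) set of all blocks with $i$th coordinate $a$; then $|\mathcal{C} \cap \mathcal{C}_a| \geq 1 + \lceil (q-1)/(m-1)\rceil$. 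Since $\mathcal{C}$ is not contained in $\mathcal{C}_a$, there is a block $B' \in \mathcal{C} \setminus \mathcal{C}_a$, and $B'$ must agree in some coordinate with every block of $\mathcal{C} \cap \mathcal{C}_a$. But a single block $B'$ with $B'_i \neq a$ can agree with at most one block having $i$th coordinate $a$ for each of the remaining $m-1$ coordinates (two distinct blocks of $OA$ meet in at most one coordinate, and any two blocks in $\mathcal{C}_a$ already meet in coordinate $i$, hence differ in all others), so $B'$ meets at most $m-1$ blocks of $\mathcal{C}_a$. This forces $1 + \lceil (q-1)/(m-1) \rceil \leq m-1$, i.e. roughly $q \leq (m-1)(m-2) + 1 \leq (m-1)^2$, contradicting $q > (m-1)^2$. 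Hence every maximum clique is canonical, which via Theorem~\ref{thm: PT=OA} gives the strict-EKR property for $X$.

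For the second assertion I would note, using Lemma~\ref{Ptexample}, that when $d \mid (q+1)$ the generalized Paley graph $GP(q^2,d)$ is a Peisert-type graph of type $(m,q)$ with $m = (q+1)/d$. The inequality $q > (m-1)^2 = \left(\tfrac{q+1}{d} - 1\right)^2 = \left(\tfrac{q+1-d}{d}\right)^2$ is equivalent to $d\sqrt{q} > q+1-d$, i.e. $d(\sqrt q + 1) > q+1$, i.e. $d > \tfrac{q+1}{\sqrt q + 1}$, which is exactly the stated hypothesis; so the first part applies. I expect the main obstacle to be pinning down the bookkeeping in the non-existence argument: one must be careful that "canonical clique" in the block graph really means a full parallel class of size $q$ (so that a non-canonical maximum clique is forced to stick out of every parallel class it meets), and one must correctly use the defining property of an orthogonal array that two distinct rows agree in at most one column — this is what makes the "at most $m-1$" bound work and is the crux of the whole estimate.
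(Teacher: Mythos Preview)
Your strategy is exactly the paper's: invoke Theorem~\ref{thm: PT=OA} to pass to the block graph of an $OA(m,q)$, show there that every non-canonical clique has size at most $(m-1)^2$, and pull the result back; the paper packages the block-graph step as Theorem~\ref{thm: EKROA} and simply cites it. Your derivation of the $GP(q^2,d)$ consequence via $m=(q+1)/d$ is also correct.

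There is one bookkeeping slip in your combinatorial core. The other $q-1$ columns of $\mathcal C$ each agree with $B$ in exactly one of the $m$ coordinates, so pigeonhole gives $\lceil (q-1)/m\rceil$, not $\lceil (q-1)/(m-1)\rceil$. With this correction your chain reads
\[
1+\big\lceil (q-1)/m\big\rceil \;\le\; |\mathcal C\cap\mathcal C_a|\;\le\; m-1,
\]
hence $q-1\le m(m-2)$, i.e.\ $q\le (m-1)^2$, which is precisely the contradiction you want. (Your ``at most $m-1$'' upper bound for $|\mathcal C\cap\mathcal C_a|$ is correct and uses exactly the OA property you flagged.)

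For comparison, the paper's proof of Theorem~\ref{thm: EKROA} fixes a column $c_0\in C$, partitions $C\setminus\{c_0\}$ into classes $D_1,\dots,D_m$ according to which row agrees with $c_0$, sorts $|D_1|\le\cdots\le|D_m|$, and then uses a column $c\in D_{m-1}$ to bound $|D_m|\le m-2$ (because $c$ can agree with a column of $D_m$ only in rows $1,\dots,m-2$). This yields $|C|\le 1+m(m-2)=(m-1)^2$ for \emph{every} non-canonical clique, not only those of size $q$. Your variant reaches the same threshold via a single outlier column $B'$ rather than the ``second-largest class'' trick; both are equally short and elementary.
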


It is natural to examine when a Peisert-type graph $X$ enjoys the strict-EKR property. While we do not have a general answer to this problem, we exhibit an infinite family of Peisert-type graphs which fail to satisfy the strict-EKR property in Section~\ref{sect:counterexamples}. The following theorem shows that the condition $q>(m-1)^2$ in Corollary~\ref{cor: EKRmsmall} is sharp when $q$ is a square.

\begin{thm}\label{thm: counterexample}
Let $q$ be an odd prime power which is not a prime. Then there exists a Peisert-type graph $X$ of order $q^2$ such that $X$ fails to have the strict-EKR property. In particular, if $q$ is a square, then there exists a Peisert-type graph $X$ of type $(\sqrt{q}+1,q)$ which fails to have the strict-EKR property.
\end{thm}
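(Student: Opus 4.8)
The plan is to use Theorem~\ref{thm:peisert-type-chromatic}, which tells us that every Peisert-type graph of order $q^2$ has clique number $q$, so that any $q$-clique is automatically maximum, together with the observation that a canonical clique containing $0$ is exactly a one-dimensional $\F_q$-subspace $a\F_q$ of $\F_{q^2}$ (the image of $\F_q$ under $x\mapsto ax+b$ that contains $0$ is $a\F_q$). Thus it suffices to produce a connection set $S$ — a union of at most $q$ cosets of $\F_q^*$ with $\F_q^*\subseteq S$ — together with an additive subgroup $W$ of $\F_{q^2}^+$ of order $q$ with $W\setminus\{0\}\subseteq S$, which makes $W$ a maximum clique of $X=\operatorname{Cay}(\F_{q^2}^+,S)$, but with $W\neq a\F_q$ for every $a\in\F_{q^2}^*$, which makes $W$ non-canonical; then $X$ fails the strict-EKR property.

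For the first assertion, write $q=p^e$ with $e\geq 2$ — this is where the hypothesis that $q$ is not prime enters — fix a primitive element $\beta$ of $\F_{q^2}$, and set $W=\operatorname{span}_{\F_p}(1,\beta,\dots,\beta^{e-1})$, an $\F_p$-subspace of $\F_{q^2}$ of size $p^e=q$ containing $1$. Since $\beta$ has degree $2e>e$ over $\F_p$ we have $\beta\notin\F_q$, so $W\neq\F_q$, and as $W\ni 1$ the only one-dimensional $\F_q$-subspace $W$ could equal is $\F_q$ itself; hence $W$ is not canonical. Let $S$ be the union of those cosets of $\F_q^*$ that meet $W\setminus\{0\}$; there are at most $|W\setminus\{0\}|=q-1$ of them, and one of them is $\F_q^*$ (the coset of $1$), so $X=\operatorname{Cay}(\F_{q^2}^+,S)$ is a Peisert-type graph of some type $(m,q)$ with $m\leq q$. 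By construction $W\setminus\{0\}\subseteq S$, so $W$ is a clique of size $q=\omega(X)$, maximum but not canonical, and $X$ fails the strict-EKR property.

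For the refined statement when $q=q_0^2$ is a square, we repeat the argument over $\F_{q_0}$ instead of $\F_p$: take $\beta$ a primitive element of $\F_{q^2}=\F_{q_0^4}$ and set $W=\operatorname{span}_{\F_{q_0}}(1,\beta)$, a two-dimensional $\F_{q_0}$-subspace (as $\beta\notin\F_{q_0}$) of size $q_0^2=q$ containing $1$; since $\beta\notin\F_{q_0^2}=\F_q$ we again get $W\neq\F_q$, so $W$ is non-canonical. The key point is now a dimension count: if a coset $c\F_q^*$ meets $W\setminus\{0\}$, then $W\cap c\F_q$ is an $\F_{q_0}$-subspace of the two-dimensional space $W$; it cannot be two-dimensional, for that would force $W=c\F_q$ (both have $q$ elements), contradicting non-canonicity, so it is one-dimensional and contributes exactly $q_0-1$ nonzero elements. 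Hence $W\setminus\{0\}$ meets precisely $(q-1)/(q_0-1)=q_0+1$ cosets, and the union $S$ of these defines a Peisert-type graph of type $(q_0+1,q)=(\sqrt{q}+1,q)$ failing the strict-EKR property.

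The verifications are elementary linear algebra over $\F_q$; the only step needing a little care is the coset count in the square case, which is precisely what pins down $m=\sqrt q+1$ and witnesses the sharpness of the condition $q>(m-1)^2$ in Corollary~\ref{cor: EKRmsmall}. Under the dictionary of Theorem~\ref{thm: PT=OA} the clique $W$ corresponds, in the square case, to the point set of an affine subplane $AG(2,q_0)$ sitting inside the net $OA(\sqrt q+1,q)$ — a set of $q$ pairwise collinear points not all on a common line — which is an equivalent, purely geometric description of the obstruction; the hypothesis that $q$ is not prime is essential, since for $q$ prime $\F_q=\F_p$ and every order-$q$ subgroup of $\F_{q^2}^+$ is a line $a\F_q$, in agreement with Theorem~\ref{EKR}.
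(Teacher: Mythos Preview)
Your proof is correct and follows essentially the same construction as the paper: both take a primitive element $\beta$ of $\F_{q^2}$ and let the non-canonical clique be the $K$-span of $1,\beta,\dots,\beta^{t-1}$ for a proper subfield $K\subset\F_q$ with $t=[\F_q:K]$ (you specialize to $K=\F_p$ in the general case, and to $K=\F_{\sqrt q}$ in the square case exactly as the paper does). Your coset count in the square case, via the $\F_{q_0}$-dimension of $W\cap c\F_q$, is actually more explicit than the paper's brief assertion that $m=\sqrt q+1$.
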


\subsection*{Outline of the paper} In Section~\ref{sect:prelim} we include more background and provide further motivation for our work. In Section~\ref{sect:peisert-orthogonal-array} we prove Theorem~\ref{thm: PT=OA} and deduce Corollary~\ref{cor: srg}, Theorem~\ref{thm: PTWHD}, Theorem~\ref{thm:peisert-type-chromatic}, and Corollary~\ref{cor: EKRmsmall}. We explore the EKR-module property of Peisert-type graphs and prove Theorem~\ref{thm: main} in Section~\ref{sect:ekr-module}. Section~\ref{sect:counterexamples} discusses the strict-EKR property of Peisert-type graphs and contains the proof of Theorem~\ref{thm: counterexample} and an explicit counterexample $GP^*(81, 10)$. Finally, Section~\ref{sect:open-problems} considers two open problems related to the present work.

\section{Preliminaries}\label{sect:prelim}

The structure of this background section is as follows. We briefly overview EKR-type results in \ref{subsect:EKR}, Peisert-type graphs in \ref{subsect:PTgraph}, strongly regular graphs in \ref{subsect:strongly-regular}, orthogonal arrays in \ref{subsect:orthogonal-array}, and weakly Hadamard matrices in \ref{subsect:weakly-hadamard}.

\subsection{EKR-type results}\label{subsect:EKR}

The classical Erd\H{o}s-Ko-Rado theorem \cite{EKR61} classified maximum intersecting families of $k$-element subsets of $\{1, 2, ..., n\}$ when $n\geq 2k+1$. Since then, EKR-type results refer to understanding maximum intersecting families in a broader context, and more generally, classifying extremal configurations in other domains. The book \cite{GM} by Godsil and Meagher provides an excellent survey on the modern algebraic approaches to proving EKR-type results for permutations, set systems, orthogonal arrays, and so on.  

The EKR-type problems related to a transitive permutation group $G$ can be reformulated in terms of the EKR properties of cocliques of the derangement graph $\Gamma(G)$, or equivalently, the cliques of the complement. Once we define canonical cocliques (or cliques), we can discuss the EKR properties of $G$ after identifying $G$ with $\Gamma(G)$. The EKR-module property was first formally defined by Meagher \cite{Mea19} in this context: a permutation group $G$ naturally acts on the vector space $W$ spanned by the characteristic vectors of canonical cliques, which makes $W$ a $G$-module. 

Each finite $2$-transitive group enjoys the EKR property \cite{MSP16}. Meagher and Sin \cite{MS21} recently showed that all finite $2$-transitive groups have the EKR-module property. However, the strict-EKR property does not hold for permutations groups in general; recently, Meagher and Razafimahatratra \cite{MR22} have shown that the general linear group $GL(2,q)$ is such a counterexample. We remark that our results are of similar flavor, although in our context of Peisert-type graphs, the corresponding vector space $W$ does not carry a natural module structure. However, we shall remark that the definition of the EKR-module property (even for permutation groups) does not need the additional $G$-module structure. 

In general, the \emph{module method} (see \cite[Section 4]{AM15}) refers to the strategy of proving that a graph $\Gamma$ satisfies the strict-EKR property in two steps:
\begin{itemize}
    \item show that $\Gamma$ satisfies the EKR-module property
    \item show that the EKR-module property implies the strict-EKR property
\end{itemize}
As an example of the module method, \cite[Theorem 4.5]{AM15} provides a sufficient condition for the second step above for 2-transitive permutation groups.

The EKR-type problems discussed in this paper are about cliques of Peisert-type graphs.
The algebraic graph theory approach to prove Theorem~\ref{EKR} suggested by \cite[Section 16.5]{GM} is precisely the module method. In the present paper, we confirm the first step for all Peisert-type graphs in Theorem~\ref{thm: main}. We show that the second step fails for an infinite family of Peisert-type graphs, and we discuss concrete counterexamples in Section~\ref{sect:counterexamples}. We remark that in~\cite[Lemma 5.6]{AFMNSR21}, a proof of the EKR-module property of Paley graphs of square order is given without using Theorem~\ref{EKR}. 

\subsection{Peisert-type graphs}\label{subsect:PTgraph} 

Let $d>1$ be a positive integer. If $q \equiv 1 \pmod {2d}$, the {\em $d$-Paley graph} on $\F_q$ \cite{SC, LP}, denoted $GP(q,d)$, is the Cayley graph  $\operatorname{Cay}(\F_q^+,(\F_q^*)^d)$, where $(\F_q^*)^d$ is the set of $d$-th powers in $\F_q^*$. Sziklai \cite{Szi99} showed that if $d>1$ and $d \mid (q+1)$, then $GP(q^2,d)$ has the strict-EKR property. Note that a Peisert-type graph is simply the union of copies of generalized Paley graphs $GP(q^2,q+1)=\operatorname{Cay}(\F_{q^2}^+,\F_q^*)$: while each copy obviously has the strict-EKR property, it is not clear whether their union would preserve this property.

Peisert \cite{WP2} introduced a new family of graphs to classify self-complementary symmetric graphs, and this family of graphs are now known as Peisert graphs. The {\em Peisert graph} of order $q=p^r$, where $p$ is a prime such that $p \equiv 3 \pmod 4$ and $r$ is even, denoted $P^*_q$, is the Cayley graph $\operatorname{Cay}(\F_{q}^{+}, M_q)$ with $M_q= \{g^j: j \equiv 0,1 \pmod 4\},$ where $g$ is a primitive root of the field $\F_q$. Mullin \cite[Chapter 8]{NM} conjectured that if $q \equiv 3 \pmod 4$, then the Peisert graph with order $q^2$ has the strict-EKR property. In \cite[Theorem 1.4]{AY}, we confirmed her conjecture when $q=p^n$ and $p>8.2n^2$.

Mullin introduced the notion of generalized Peisert graphs; see \cite[Section 5.3]{NM}. 
\begin{defn}
Let $d$ be a positive even integer, and $q$ a prime power such that $q \equiv 1 \pmod {2d}$. The {\em $d$-th power Peisert graph of order $q$}, denoted $GP^*(q,d)$, is the Cayley graph $\operatorname{Cay}(\F_q^+, M_{q,d})$, where
$$
M_{q,d}=\bigg\{g^{dk+j}: 0\leq j \leq \frac{d}{2}-1, k \in \Z\bigg\},
$$
and $g$ is a primitive root of $\F_q$.
\end{defn}

The following lemma further motivates the definition of Peisert-type graphs (Definition~\ref{defn:peisert-type}) by showing that they unify the previously studied graphs.

\begin{lem} [{\cite[Lemma 2.10]{AY}}] \label{Ptexample}
The following families of Cayley graphs are Peisert-type graphs:
\begin{itemize}
    \item Paley graphs of square order;
    \item Peisert graph with order $q^2$, where $q \equiv 3 \pmod 4$;
    \item Generalized Paley graphs $GP(q^2,d)$, where $d \mid (q+1)$ and $d>1$;
    \item Generalized Peisert graphs $GP^*(q^2,d)$, where $d \mid (q+1)$ and $d$ is even.
\end{itemize}
\end{lem}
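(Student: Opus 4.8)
The plan is to reduce all four cases to a single statement about exponents with respect to a primitive root. Fix a primitive root $g$ of $\F_{q^2}$ and recall that $\F_q^\ast = \langle g^{q+1}\rangle$ is the unique subgroup of $\F_{q^2}^\ast$ of index $q+1$; hence the cosets of $\F_q^\ast$ in $\F_{q^2}^\ast$ are exactly $g^i\F_q^\ast$ for $0 \le i \le q$, and $g^a, g^b$ lie in the same coset iff $a \equiv b \pmod{q+1}$. By Definition~\ref{defn:peisert-type}, $\operatorname{Cay}(\F_{q^2}^+, S)$ is then a Peisert-type graph of type $(m,q)$ exactly when (i) $S$ is closed under multiplication by $\F_q^\ast$, equivalently $S$ is a union of $\F_q^\ast$-cosets; (ii) $\F_q^\ast \subseteq S$; and (iii) the number $m = |S|/(q-1)$ of cosets involved satisfies $m \le q$. (The requirement $S = -S$ is then automatic: $-1$ is the unique element of order $2$ in $\F_{q^2}^\ast$ and lies in $\F_q^\ast$ because $q-1$ is even.)

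The point is that in each of the four families the connection set has the shape $S = \{\, g^j : (j \bmod N) \in J \,\}$ for a modulus $N$ and a subset $J \subseteq \Z/N\Z$ with $0 \in J$: namely $(N,J) = (2,\{0\})$ for Paley graphs of square order; $(N,J) = (d,\{0\})$ for $GP(q^2,d)$; $(N,J) = (4,\{0,1\})$ for the Peisert graph of order $q^2$; and $(N,J) = (d,\{0,1,\dots,\tfrac d2-1\})$ for $GP^\ast(q^2,d)$. For such an $S$, multiplication by $g^{q+1}$ shifts exponents by $q+1$ and therefore fixes the residue of the exponent modulo $N$ whenever $N \mid q+1$, so $S$ is then closed under multiplication by $\langle g^{q+1}\rangle = \F_q^\ast$, giving (i); and since $0 \in J$ we get $\F_q^\ast = \langle g^{q+1}\rangle \subseteq \langle g^N\rangle \subseteq S$, again using $N \mid q+1$, giving (ii). Thus (i) and (ii) both come down to the divisibility $N \mid q+1$, which holds in every case: $2 \mid q+1$ since $q$ is odd, $4 \mid q+1$ since $q \equiv 3 \pmod 4$, and $d \mid q+1$ by hypothesis in the two generalized families. (One also records that the graphs are well-defined — for instance $2d \mid q^2-1$ for $GP(q^2,d)$ and $GP^\ast(q^2,d)$ follows from $d \mid q+1$ together with $2 \mid q-1$, and $q \equiv 3 \pmod 4$ forces the underlying prime to be $\equiv 3 \pmod 4$ with odd exponent, so the Peisert graph of order $q^2$ makes sense.)

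For (iii) it remains to compute $|S|$. In the Paley, Peisert, and generalized Peisert cases $|J| = N/2$, so $|S| = (q^2-1)/2$ and $m = (q+1)/2 \le q$; in the generalized Paley case $|S| = (q^2-1)/d$ and $m = (q+1)/d \le (q+1)/2 \le q$ as $d \ge 2$. This proves the lemma, with type $\big(\tfrac{q+1}{2},q\big)$ in the first, second, and fourth items and type $\big(\tfrac{q+1}{d},q\big)$ for $GP(q^2,d)$. There is no genuinely hard step here — the whole content is the uniform reformulation modulo $q+1$ — and the only place needing a little care is the Peisert graph and $GP^\ast(q^2,d)$ with $d>2$, where $S$ is not a subgroup: one must note that multiplication by $g^{q+1}$ preserves each residue class in $J$ \emph{separately} (not merely their union), which is exactly what $N \mid q+1$ ensures.
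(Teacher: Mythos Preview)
The paper does not supply its own proof of this lemma: it is quoted verbatim from \cite{AY}*{Lemma 2.10} and left without argument. Your proposal is a correct and complete proof. The key observation---that in each case the connection set is a union of residue classes of exponents modulo some $N\mid q+1$, and that $\F_q^\ast=\langle g^{q+1}\rangle$ then automatically stabilizes every such class---is exactly the standard way to verify these examples, and your bookkeeping for the side conditions (well-definedness of the graphs, the bound $m\le q$, and $S=-S$) is accurate. There is nothing to compare against in the present paper, so this stands on its own.
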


In \cite[Theorem 1.2]{AY}, we showed that each maximum clique in a Peisert-type graph of type $(m,q)$ with $m \leq \frac{q+1}{2}$ is an affine $\F_p$-subspace. In particular, every Peisert-type graph of type $(m,p)$ with $m \leq \frac{p+1}{2}$ has the strict-EKR property. To deduce that maximum cliques of a general Peisert-type graph $X$ have the subfield structure, that is, $X$ has the strict-EKR property, we borrowed character sum estimates over subspaces. We found that the strict-EKR property holds under some conditions on the connection set of $X$ and the characteristic $p$ \cite[Theorem 1.3]{AY}.

We expected that directly extending Blokhuis' and Sziklai's proofs to a general Peisert-type graph is difficult \cite[Remark 2.16]{AY}. In fact, we speculated that there might be an infinite family of Peisert-type graphs which fail to have the strict-EKR property and gave a few counterexamples of small size in \cite[Example 2.18]{AY}. We confirm our prediction in Theorem~\ref{thm: counterexample}.

\subsection{Strongly regular graphs}\label{subsect:strongly-regular}
We first recall the definition of strongly regular graphs.
\begin{defn}[Strongly regular graph]
If $X$ is a $k$-regular graph with $n$ vertices, such that any two adjacent vertices have $\lambda$ common neighbors, and any two distinct non-adjacent vertices have $\mu$ common neighbors, then $X$ is a {\em strongly regular graph} with parameters $(n, k, \lambda, \mu)$.
\end{defn}

It is well-known that Paley graphs are strongly regular; see for example \cite[Theorem 5.8.1]{GM}.

\begin{thm} \label{Paleysrg}
If $q \equiv 1 \pmod 4$, then the Paley graph $P_q$ is a strongly regular graph with parameters $(q,\frac{q-1}{2},\frac{q-5}{4},\frac{q-1}{4})$. Moreover, the eigenvalues of $P_q$ are $\frac{q-1}{2}$ (with multiplicity 1), $\frac{1}{2}(-1 \pm \sqrt{q})$ (each with multiplicity $\frac{q-1}{2}$).
\end{thm}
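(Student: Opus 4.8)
The plan is to compute the spectrum of $P_q$ directly from the additive characters of $\F_q$ together with the classical evaluation of the quadratic Gauss sum, and then to invoke the standard fact that a connected regular graph with exactly three distinct eigenvalues is strongly regular.

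First I would note that, since $q\equiv 1\pmod 4$, the quadratic character $\chi$ of $\F_q$ satisfies $\chi(-1)=1$, so the set $S=(\F_q^*)^2$ of nonzero squares obeys $S=-S$ and $P_q=\operatorname{Cay}(\F_q^+,S)$ is a genuine undirected graph, which is $\tfrac{q-1}{2}$-regular because $|S|=\tfrac{q-1}{2}$. Since $P_q$ is a Cayley graph over the abelian group $\F_q^+$, its eigenvalues are $\eta_a=\sum_{s\in S}\psi_a(s)$ indexed by $a\in\F_q$, where $\psi_a(x)=\psi_1(ax)$ for a fixed nontrivial additive character $\psi_1$. Expanding the indicator of $S$ on $\F_q^*$ as $\tfrac12(1+\chi)$, the substitution $x\mapsto a^{-1}x$ gives, for $a\neq 0$,
\[
\eta_a=\frac12\Big(\sum_{x\in\F_q^*}\psi_a(x)+\sum_{x\in\F_q^*}\chi(x)\psi_a(x)\Big)=\frac12\big(-1+\chi(a)\,g\big),
\]
where $g=\sum_{x\in\F_q}\chi(x)\psi_1(x)$ is the quadratic Gauss sum, while $\eta_0=|S|=\tfrac{q-1}{2}$.

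Next I would record the two classical facts $g\overline{g}=q$ and $\overline{g}=\chi(-1)g$; since $q\equiv 1\pmod 4$ the latter says $g$ is real, hence $g=\pm\sqrt{q}$. Therefore $\eta_a\in\{\tfrac{-1+\sqrt q}{2},\tfrac{-1-\sqrt q}{2}\}$ for every $a\neq 0$, and the two values are attained exactly $\tfrac{q-1}{2}$ times each, according to whether $\chi(a)=1$ or $\chi(a)=-1$; together with the eigenvalue $\tfrac{q-1}{2}$ this accounts for all $q$ eigenvalues and yields the stated multiplicities. (That $\eta_a=\tfrac{q-1}{2}$ forces $\psi_a$ to be trivial follows from the triangle inequality, so the top eigenvalue is simple; equivalently $P_q$ is connected.) Finally, a connected $k$-regular graph with exactly three eigenvalues $k,r,s$ is strongly regular: $(A-rI)(A-sI)$ annihilates every eigenvector orthogonal to the all-ones vector and scales the latter, so it equals $\tfrac{(k-r)(k-s)}{q}J$, and comparing entries of $A^2$ shows that the number of common neighbours of two vertices depends only on whether they are adjacent. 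Solving $\lambda-\mu=r+s=-1$ and $k-\mu=-rs=\tfrac{q-1}{4}$ then gives $\mu=\tfrac{q-1}{4}$ and $\lambda=\tfrac{q-5}{4}$, as claimed.

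I expect no genuine obstacle here: the result is classical, and the only nonelementary input is the magnitude $|g|^2=q$ of the quadratic Gauss sum, which is standard (and we do not even need Gauss's sign determination). If one prefers to avoid Gauss sums altogether, the parameters $\lambda$ and $\mu$ can instead be obtained by directly counting the $c\in\F_q$ with $c\in S$ and $c-s_0\in S$ for a fixed $s_0\in S$, again via the expansion $\tfrac12(1+\chi)$; this replaces the spectral computation by a marginally longer but completely elementary character-sum evaluation, after which regularity and the count of common neighbours give the strong regularity directly and the eigenvalues follow from the SRG parameters.
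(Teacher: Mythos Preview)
Your argument is correct and is essentially the standard proof via quadratic Gauss sums; there is no gap. The paper itself does not supply a proof of this theorem at all---it records the result as well known and simply cites \cite{GM}*{Theorem~5.8.1}---so there is nothing to compare your approach against. In short: your proposal provides a complete classical proof where the paper is content with a reference.
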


Weng, Qiu, Wang, and Xiang \cite{WQWX} introduced the definition of pseudo-Paley graphs.

\begin{defn}[Pseudo-Paley graph] \label{defn: ppg}
A {\em pseudo-Paley graph} is a strongly regular graph with the same parameters $(n,k, \lambda, \mu)$ as the Paley graph $P_q$ for some $q$.
\end{defn}

A clique $C$ in a regular graph is called \emph{regular} if every vertex that is not in $C$ has the same number of neighbors in $C$. The following lemma gives an upper bound on the clique number of a strongly regular graph, and shows that a maximum clique whose size agrees with the given upper bound must be regular; see for example \cite[Proposition 1.3.2]{BCN89}.

\begin{lem} [Delsarte-Hoffman bound]\label{HoffmanBound}
Suppose that $X$ is a strongly regular graph with parameters $(n,k,\lambda,\mu)$ and smallest eigenvalue $-m$.
Let $C$ be a clique in $X$. Then $|C| \le 1+\frac{k}{m}$, with equality if and only if every vertex that is not in $C$ has the same number of neighbors (namely $\frac{\mu}{m}$) in $C$. 
\end{lem}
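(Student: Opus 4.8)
The plan is to run the standard spectral positive-semidefiniteness argument, being careful with the strongly regular parameters so that the resulting bound comes out in the stated shape $1+k/m$, and then to extract the equality condition from the very same computation. We may assume $X$ is connected and not complete, the remaining cases being immediate ($\mu=0$, $m=1$, and the bound reads $|C|\le k+1$). Write $A$ for the adjacency matrix with spectral decomposition $A=kE_0+rE_1+(-m)E_2$, where $E_0=\frac1nJ$, the restricted eigenvalues satisfy $r>-m$, and the $E_i$ are orthogonal projections with $E_0+E_1+E_2=I$. Applying the strongly regular identity $A^2=kI+\lambda A+\mu(J-I-A)$ to an eigenvector orthogonal to $\mathbf 1$ shows that $r$ and $-m$ are the roots of $t^2-(\lambda-\mu)t-(k-\mu)$, so by Vieta $\lambda-\mu=r-m$ and $\mu=k-rm$. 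Expanding $(A-rI)(A+mI)$ with the same quadratic identity and substituting these two relations collapses it to $\mu J$; on the other hand $(A-rI)(A+mI)$ annihilates the $r$- and $(-m)$-eigenspaces and scales $\mathbf 1$ by $(k-r)(k+m)$, hence it also equals $\tfrac{(k-r)(k+m)}{n}J$. Comparing gives $n\mu=(k-r)(k+m)$. These two parameter identities are the only arithmetic inputs.

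For the bound, let $C$ be a clique, $c=|C|$, and $x$ its characteristic vector, with components $x_i=E_ix$. Then $\|x_0\|^2=c^2/n$, $\sum_i\|x_i\|^2=\|x\|^2=c$, and $x^\top Ax=c(c-1)=k\tfrac{c^2}{n}+r\|x_1\|^2-m\|x_2\|^2$. Eliminating $\|x_1\|^2$ via $\|x_1\|^2=c-c^2/n-\|x_2\|^2$ gives
\[
(r+m)\|x_2\|^2=c\Big(\tfrac{(k-r)c}{n}-c+r+1\Big)\ge 0,
\]
and since $r+m>0$ this forces $c\le\tfrac{n(r+1)}{\,n-k+r\,}$. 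Using $n\mu=(k-r)(k+m)$ to rewrite $n$, together with $k+m-\mu=m(r+1)$ (which is just $\mu=k-rm$), the right-hand side simplifies to $\tfrac{k+m}{m}=1+\tfrac{k}{m}$, proving $|C|\le 1+\tfrac{k}{m}$.

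For the equality case, the displayed inequality is tight exactly when $\|x_2\|^2=0$, i.e.\ $E_2x=0$. In that case $A$ acts on $x$ through its $E_0$- and $E_1$-parts only, so $Ax=rx+\tfrac{(k-r)c}{n}\mathbf 1$; reading off a coordinate outside $C$ shows that every vertex not in $C$ has exactly $\tfrac{(k-r)c}{n}$ neighbors in $C$, and substituting $c=1+k/m$ and $n\mu=(k-r)(k+m)$ turns this number into $\mu/m$. Conversely, if every vertex outside $C$ has the same number $t$ of neighbors in $C$, then $Ax=t\mathbf 1+(c-1-t)x$ (check the entries on and off $C$); applying $E_2$, which kills $\mathbf 1$ and turns $A$ into multiplication by $-m$, yields $(c-1-t+m)E_2x=0$, and since $X$ is connected and non-complete we have $m>1$ while $t\le c$, so the scalar $c-1-t+m\ge m-1>0$ is nonzero and $E_2x=0$; thus equality holds.

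I expect the only mild subtleties to be exactly these last two points: the bookkeeping showing that $\tfrac{n(r+1)}{n-k+r}$ is literally $1+k/m$ (which is what the identities $\mu=k-rm$ and $n\mu=(k-r)(k+m)$ are for), and ruling out the spurious factor $c-1-t+m=0$ in the converse direction of the equality characterization. Everything else is routine linear algebra, so there is no genuine conceptual obstacle — this is a classical argument, and the target reference \cite{BCN89}*{Proposition 1.3.2} may simply be cited if a self-contained proof is not wanted.
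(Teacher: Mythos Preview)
The paper does not supply a proof of this lemma; it merely states it with the attribution ``see for example \cite{BCN89}*{Proposition 1.3.2}.'' Your self-contained spectral argument is the standard one and is correct, including the parameter bookkeeping and the two directions of the equality characterization; as you yourself note at the end, simply citing the reference is what the paper does.
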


\subsection{Block graphs of orthogonal arrays and their EKR properties}\label{subsect:orthogonal-array}
In this subsection, we recall basic terminology about orthogonal arrays and revisit the related EKR properties.

An {\em orthogonal array} $OA(m, n)$ is an $m \times n^2$ array with entries from an $n$-element set $T$
with the property that the columns of any $2 \times n^2$ subarray consist of all $n^2$ possible pairs. The {\em block graph of an orthogonal array} $OA(m, n)$, denoted $X_{OA(m,n)}$,  is defined to be the graph
whose vertices are columns of the orthogonal array, where two columns are adjacent if there exists a row in which they have the same entry. Let $S_{r,i}$ be the set of columns of $OA(m, n)$ that have the entry $i$ in row $r$. These sets are cliques, and since each element of the $n$-element set $T$ occurs
exactly $n$ times in each row, the size of $S_{r,i}$ is $n$ for all $i$ and $r$. These cliques
are called the \emph{canonical cliques} in the block graph $X_{OA(m,n)}$. A simple combinatorial argument shows that the block graph of an orthogonal array is strongly regular.

\begin{thm}[{\cite[Theorem 5.5.1]{GM}}]\label{thm: OAsrg}
If $OA(m, n)$ is an orthogonal array where $m < n+1$, then
its block graph $X_{OA(m,n)}$ is strongly regular with parameters
$$
(n^2, m(n - 1), (m - 1)(m - 2) + n - 2, m(m - 1));
$$
the eigenvalues of $X_{OA(m,n)}$ are $m(n - 1)$, $n-m$, and $-m$ with multiplicities $1$, $m(n - 1)$, and $(n - 1)(n + 1 - m)$, respectively.
\end{thm}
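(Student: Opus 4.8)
The plan is to work purely with the ``net''-type incidence structure carried by an orthogonal array, forgetting any algebraic origin. Write $X$ for $X_{OA(m,n)}$ and let $V$ be its vertex set, the $n^2$ columns of $OA(m,n)$. For each row $r$ the canonical cliques $S_{r,1},\dots,S_{r,n}$ partition $V$ into $n$ blocks of size $n$; I will call the $S_{r,i}$ \emph{lines} and, for fixed $r$, the family $\{S_{r,i}\}_i$ a \emph{parallel class}. Two columns are adjacent in $X$ precisely when they lie on a common line. The two structural facts that do all the work are: (i) two distinct columns lie on \emph{at most one} common line, since if they agreed in two distinct rows $r,r'$ then the $2\times n^2$ subarray on rows $r,r'$ would contain the corresponding ordered pair of entries in at least two of its columns, contradicting the defining property of $OA(m,n)$; and (ii) a line from one parallel class and a line from a different parallel class meet in \emph{exactly one} column, since among the $n^2$ columns of the $2\times n^2$ subarray on the two corresponding rows every ordered pair of entries occurs exactly once. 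I will also record that the hypothesis $m<n+1$ forces $X$ to be non-complete, so that $\mu$ is meaningful and the eigenvalue $-m$ genuinely occurs.

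Next I would read off the parameters. Fixing a column $c$ and grouping its neighbours according to which row witnesses the agreement — the groups being disjoint by (i) — gives $k=m(n-1)$. For $\lambda$, take adjacent columns $c,c'$, which by (i) agree in a unique row $r_0$; a common neighbour either lies on the line $S_{r_0,\cdot}$ through $c$ and $c'$, of which there are $n-2$ besides $c,c'$, or it agrees with $c$ in some row $r\neq r_0$ and with $c'$ in some row $r'\neq r_0$, where necessarily $r\neq r'$. By (ii) each such ordered pair $(r,r')$ produces exactly one common neighbour, which one checks is distinct from $c,c'$ and off the line $S_{r_0,\cdot}$; conversely a common neighbour off that line recovers its pair $(r,r')$ via (i). Hence $\lambda=(m-1)(m-2)+n-2$. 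For $\mu$, take non-adjacent $c,c'$: a common neighbour agrees with $c$ in some row $r$ and with $c'$ in some row $r'$ with $r\neq r'$ (equality would make $c,c'$ adjacent), and by (ii) such ordered pairs correspond bijectively to common neighbours, so $\mu=m(m-1)$. Together with $|V|=n^2$ this is the claimed parameter set.

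For the spectrum I would invoke the standard fact that a connected non-complete strongly regular graph with parameters $(v,k,\lambda,\mu)$ has, besides $k$, the eigenvalues $\tfrac12\bigl[(\lambda-\mu)\pm\sqrt{(\lambda-\mu)^2+4(k-\mu)}\,\bigr]$. Here $\lambda-\mu=n-2m$ and $k-\mu=m(n-m)$, so the discriminant collapses to $(n-2m)^2+4m(n-m)=n^2$ and the two non-principal eigenvalues are $n-m$ and $-m$. Their multiplicities $f$ and $g$ are determined by $f+g=v-1=n^2-1$ together with the trace identity $k+f(n-m)+g(-m)=0$; solving this linear system gives $f=m(n-1)$ and $g=(n-1)(n+1-m)$, which is exactly the stated list.

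The genuinely delicate point is the pair of incidence counts for $\lambda$ and $\mu$: one has to count with \emph{ordered} pairs of parallel classes, and then, for each ordered pair, verify via (i) and (ii) that the column they single out really is a common neighbour — distinct from the two given columns, and, in the $\lambda$ case, not already among the $n-2$ collinear ones — and that the resulting correspondence is an honest bijection. The regularity count, the quadratic for the eigenvalues, and the linear system for the multiplicities are all routine by comparison.
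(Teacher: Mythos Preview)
Your argument is correct and is essentially the standard one; note, however, that the paper does not supply its own proof of this theorem but simply cites it from Godsil--Meagher, so there is no ``paper's proof'' to compare against. One tiny caveat: when you invoke the standard eigenvalue formula you assume $X$ is connected, which needs $\mu>0$, i.e.\ $m\ge 2$; the degenerate case $m=1$ (a disjoint union of $n$ copies of $K_n$) should be handled separately, but it is trivial and the stated spectrum is still correct there, with $k$ and $n-m$ coinciding.
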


Combining Lemma~\ref{HoffmanBound} and Theorem~\ref{thm: OAsrg}, we see that the clique number of $X_{OA(m,n)}$ is at most $1+\frac{m(n-1)}{m}=n$, which is equal to the size of the canonical clique. Thus, block graphs of orthogonal arrays have the EKR-property. It is known that when $n>(m-1)^2$, the block graph of $OA(m,n)$ has the strict-EKR property; see \cite[Corollary 5.5.3]{GM}. We include a short proof for the sake of completeness, especially because it will lead to a simple and self-contained proof of Corollary~\ref{cor: EKRmsmall}.

\begin{thm}\label{thm: EKROA}
Let $X=X_{OA(m,n)}$ be the block graph of an orthogonal array $OA(m, n)$ with
$n >(m - 1)^ 2$. Then $X$ has the strict-EKR property: the only maximum cliques in $X$ are the columns that have entry $i$ in row $r$ for some $1 \leq i \leq n$ and $1 \leq r \leq m$.
\end{thm}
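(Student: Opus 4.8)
The plan is to analyze the structure of a hypothetical maximum clique $C$ in $X = X_{OA(m,n)}$, using the fact that by Theorem~\ref{thm: OAsrg} and Lemma~\ref{HoffmanBound}, $|C| = n$ forces $C$ to be a regular clique in which every vertex outside $C$ has exactly $\frac{\mu}{m} = \frac{m(m-1)}{m} = m-1$ neighbors in $C$. First I would set up the counting framework: view the columns in $C$ as $n$ points, and for each row $r$ and each symbol $i$, let $a_{r,i}$ be the number of columns of $C$ lying in the canonical clique $S_{r,i}$. Since the columns of $C$ are pairwise adjacent, within any fixed row $r$ the symbols appearing on columns of $C$ partition $C$; but the key point is to bound how the $n$ columns of $C$ can be distributed. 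The goal is to show that for some row $r$, all $n$ columns of $C$ carry the \emph{same} symbol in row $r$, i.e.\ $C = S_{r,i}$.

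The main step is a counting argument on pairs. Suppose for contradiction that in \emph{every} row $r$, the columns of $C$ use at least two distinct symbols. Consider a fixed row $r$: the columns of $C$ are split among symbols with multiplicities summing to $n$. Two columns of $C$ are ``matched in row $r$'' if they share the symbol there; since $C$ is a clique, every pair of columns of $C$ is matched in at least one row. Counting matched pairs: the total number of pairs is $\binom{n}{2}$, and the number of pairs matched in row $r$ is $\sum_i \binom{a_{r,i}}{2}$. By convexity, if row $r$ uses $t_r \geq 2$ distinct symbols, then $\sum_i \binom{a_{r,i}}{2}$ is maximized when the distribution is as unequal as possible, giving roughly $\binom{n - (t_r - 1)}{2} + (t_r-1)$, but more importantly it is \emph{strictly less} than $\binom{n}{2}$. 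Summing over the $m$ rows and using that each of the $\binom{n}{2}$ pairs is covered at least once, I would derive an inequality forcing $\sum_r \sum_i \binom{a_{r,i}}{2} \geq \binom{n}{2}$; combining this with the per-row upper bounds and the hypothesis $n > (m-1)^2$ should produce a contradiction. Concretely, if no row has all columns equal, each row contributes at most $\binom{n-1}{2} + 1 = \binom{n}{2} - (n-1) + 1 = \binom{n}{2} - n + 2$ matched pairs, so $m\bigl(\binom{n}{2} - n + 2\bigr) \geq \binom{n}{2}$ must fail when $n$ is large relative to $m$; one checks this reduces to an inequality of the shape $n^2 < $ (something quadratic in $m$), which $n > (m-1)^2$ violates. (I would need to be a little careful with the exact extremal distribution per row and track the arithmetic, but the shape is clear.)

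Once we know some row $r$ has all $n$ columns of $C$ carrying a common symbol $i$, we get $C \subseteq S_{r,i}$, and since $|C| = n = |S_{r,i}|$ we conclude $C = S_{r,i}$, a canonical clique. The main obstacle I anticipate is getting the per-row pair-count bound tight enough: a crude bound ``at most $\binom{n}{2} - 1$'' only gives a contradiction for $n$ roughly exponential in $m$, so the argument really needs the sharper estimate that a row using $t \geq 2$ symbols covers at most $\binom{n-1}{2}+1$ pairs, and then the inequality $m\bigl(\binom{n-1}{2}+1\bigr) < \binom{n}{2}$ needs to be checked to hold exactly in the range $n > (m-1)^2$. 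An alternative, possibly cleaner route is to pick two columns $x, y \in C$ adjacent via row $r$ with symbol $i$, note every other column of $C$ adjacent to both, and count via the regularity condition (every outside vertex has $m-1$ neighbors in $C$) how many columns of $C$ can avoid $S_{r,i}$; this dualizes the count and may make the threshold $n>(m-1)^2$ appear more transparently. I would try both and keep whichever gives the cleanest constant.
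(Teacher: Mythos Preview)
Your pair-counting approach has a fatal sign error. You correctly observe that every pair of columns in $C$ agrees in exactly one row (at least one because $C$ is a clique; at most one by the definition of an orthogonal array), so $\sum_{r,i} \binom{a_{r,i}}{2} = \binom{n}{2}$. If no row is constant on $C$, the per-row contribution is at most $\binom{n-1}{2}$ (your ``$+1$'' is spurious since $\binom{1}{2}=0$), yielding
\[
\binom{n}{2} \le m\binom{n-1}{2}, \qquad\text{i.e.,}\qquad n \le m(n-2).
\]
But this inequality \emph{holds} whenever $n \ge \tfrac{2m}{m-1}$, which for $m\ge 2$ means $n\ge 4$. So for large $n$ there is no contradiction at all; the inequality gets \emph{easier} to satisfy as $n$ grows, not harder. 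Your claim that ``$m\bigl(\binom{n}{2}-n+2\bigr)\ge\binom{n}{2}$ must fail when $n$ is large relative to $m$'' is simply false: the left side is quadratic in $n$ with leading coefficient $m/2 > 1/2$, so it dominates the right side for large $n$. The pair-counting bound is therefore far too weak to detect the threshold $(m-1)^2$.

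The paper's proof takes a completely different, direct route that never invokes regularity or the Delsarte--Hoffman bound. It shows that \emph{any} clique $C$ not contained in a canonical clique satisfies $|C|\le (m-1)^2$. After normalizing so that the all-zero column $c_0$ lies in $C$, every other column of $C$ has exactly one zero entry; partition $C\setminus\{c_0\}$ into sets $D_1,\dots,D_m$ according to the row where that zero occurs, ordered so that $|D_1|\le\cdots\le|D_m|$. If $C$ is non-canonical then at least two of the $D_j$ are nonempty, so $|D_{m-1}|>0$. Now a column $c\in D_{m-1}$ can be adjacent to a column of $D_m$ only via one of rows $1,\dots,m-2$ (rows $m-1$ and $m$ are excluded since exactly one of the two columns has a zero there), and for each such row at most one column of $D_m$ can agree with $c$. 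Hence $|D_m|\le m-2$, and $|C|\le 1+m|D_m|\le 1+m(m-2)=(m-1)^2$. This local argument is what makes the threshold $(m-1)^2$ appear transparently; your ``alternative route'' at the end is gesturing in this direction, but the key idea you are missing is to anchor the count at a single column and partition the remaining clique vertices by their row of agreement with it.
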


\begin{proof}
The case $m=1$ is trivial. Next we assume $m \geq 2$. Clearly, the set of columns that have entry $i$ in row $r$ forms a clique of size $n$. Next, we consider a clique $C$ that is not of this form, and we will show $|C|\leq (m-1)^2<n$. Then we can conclude that the only cliques of size $n$ in $X$ are canonical.

Let $C$ be a non-canonical clique in the block graph of an orthogonal array $OA(m, n)$. We can relabel the entries to be $\{0, 1, \ldots, n-1\}$. By translating entries of each row modulo $n$, we may assume, without loss of generality, that the column $c_0$ with all zeros is in $C$. Then other columns in $C$ must each have exactly one zero entry. Let $D_i$ be the set of columns in $C\setminus \{c_0\}$ having a zero entry in the $i$-th row. Without loss of generality, we assume that $|D_1| \leq |D_2| \leq \cdots \leq |D_m|$. By assumption, we have $0<|D_{m-1}|\leq |D_m|$. Let $c$ be a column in $D_{m-1}$, then for each $1 \leq j \leq m-2$, there is at most one column in $D_m$ that shares the same entry with $c$ in row $j$. And if $j \in \{m-1,m\}$, there is no column in $D_m$ that shares the same entry with $c$ in row $j$. Therefore, $|D_m| \leq m-2$. It follows that \[|C| \leq 1+\sum_{j=1}^{m}|D_j| \leq 1+m|D_m| \leq (m-1)^2<n.\qedhere\]
\end{proof}

When $n \leq (m-1)^2$, Theorem~\ref{thm: EKROA} no longer holds \cite[Section 5.5]{GM}. However, it is known that a weaker statement is true, namely: the block graph $X_{OA(m,n)}$ always satisfies the EKR-module property (for example, this follows from \cite[Theorem 5.5.5]{GM}). We will give a detailed explanation for this fact in Section~\ref{sect:ekr-module}, and use it to establish our main result.

An orthogonal array $OA(m,n)$ is called {\em extendible} if it occurs as the first $m$ rows of an $OA(m+1,n)$. The following theorem characterizes extendible orthogonal arrays.

\begin{thm}[{\cite[Theorem 10.4.5]{GR01}}] \label{thm: OAchi}
An orthogonal array $OA(m,n)$ is extendible if and only if its block graph $X_{OA(m,n)}$ has chromatic number $n$.
\end{thm}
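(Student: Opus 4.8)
The plan is to translate extendibility directly into a graph-colouring statement about the block graph. Write $X$ for $X_{OA(m,n)}$ and identify the symbol set $T$ with a palette of $n$ colours. Adjoining a new $(m+1)$-st row to an $OA(m,n)$ is literally the same as choosing a function $f$ from the set of columns — equivalently, from the vertex set of $X$ — to $T$. The orthogonal-array axiom for the enlarged array concerns the $2\times n^2$ subarrays; those using only the original $m$ rows are unaffected, so the only new constraints come from subarrays formed by an old row $r$ together with the new row. For such a pair the condition is that rows $r$ and $m+1$ realise all of $T\times T$, and since the symbol $i$ occurs in exactly $n$ columns of row $r$ — namely those forming the canonical clique $S_{r,i}$ — this says precisely that $f$ restricted to $S_{r,i}$ is a bijection onto $T$. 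Hence $OA(m,n)$ is extendible if and only if there is a map $f\colon V(X)\to T$ that is a bijection on every canonical clique.

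Next I would check that such an $f$ is the same thing as a proper $n$-colouring of $X$. The key observation is that every edge of $X$ lies inside some canonical clique: two columns are adjacent exactly when they agree in some row $r$, say with common symbol $i$, and then both lie in $S_{r,i}$. Thus if $f$ is bijective — in particular injective — on each $S_{r,i}$, it assigns distinct colours to the ends of every edge, so it is a proper colouring with $n$ colours. Conversely, if $f$ is a proper $n$-colouring, then each canonical clique $S_{r,i}$ is a set of $n$ pairwise adjacent vertices, so it receives $n$ distinct colours, i.e. all of $T$, and $f$ is bijective on it. Therefore extendibility is equivalent to $\chi(X)\le n$.

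Finally I would close the gap between "$\le n$" and "$=n$": the canonical cliques have size $n$, and by the Delsarte--Hoffman bound (Lemma~\ref{HoffmanBound}) applied with the parameters of Theorem~\ref{thm: OAsrg} no clique exceeds $n$, so $\omega(X)=n$ and hence $\chi(X)\ge\omega(X)=n$ always. Combining this with the previous paragraph gives $OA(m,n)$ extendible $\iff\chi(X)\le n\iff\chi(X)=n$, which is the claim.

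I do not expect a genuine obstacle; the argument is a direct dictionary translation. The only points requiring care are (i) verifying that passing from $OA(m,n)$ to $OA(m+1,n)$ imposes no conditions among the original rows and that the new conditions are exactly "bijective on each canonical clique", which rests on each symbol occurring exactly $n$ times per row so that $|S_{r,i}|=n$; and (ii) the elementary fact that the edge set of the block graph is the union of the edge sets of the canonical cliques, which is what makes "bijective on every canonical clique" and "proper $n$-colouring" coincide. If one's definition of $OA$ permits the degenerate case $m=1$, it should be dispatched separately (there $X$ is a disjoint union of copies of $K_n$, trivially $n$-chromatic, and the array is trivially extendible).
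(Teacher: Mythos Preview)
The paper does not supply its own proof of this statement; Theorem~\ref{thm: OAchi} is quoted from \cite{GR01}*{Theorem 10.4.5} without argument and used as a black box in the proof of Theorem~\ref{thm:peisert-type-chromatic}. Your proposal is correct and is essentially the standard proof one finds in Godsil--Royle: an extra row is the same data as a map from columns to symbols, the orthogonality constraint against each old row is exactly bijectivity on each canonical clique, and since the canonical cliques cover all edges of the block graph this is precisely a proper $n$-colouring. One minor simplification: you invoke the Delsarte--Hoffman bound to pin down $\omega(X)=n$, but for the inequality $\chi(X)\ge n$ you only need the existence of an $n$-clique, which any canonical clique already provides; the upper bound on $\omega$ is not needed here.
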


\subsection{Weakly Hadamard diagonalizable graphs}\label{subsect:weakly-hadamard}

Recall that a \emph{Hadamard matrix} is a square matrix with entries $1$ or $-1$ such that any two columns are mutually orthogonal. There are several open problems about the structure of the Hadamard matrices. One of them is determining the existence of a Hadamard matrix of a particular order. One classical construction dates back to Paley \cite{Paley} using quadratic residues over a finite field, which eventually motivated the definition of the Paley graph; see \cite{GJ} for a historical discussion.

A graph $\Gamma$ is called \emph{Hadamard diagonalizable} if the Laplacian of $\Gamma$ can be diagonalized by a Hadamard matrix \cite{BFK11}. Recently, Adm et al. \cite{AFMNSR21} studied a larger class of graphs which contains some families of strongly regular graphs. In order to present this result, they introduced a broader class of matrices which include Hadamard matrices.

\begin{defn}[\cite{AFMNSR21}]\label{defn: WHD} 
A square matrix is called weakly Hadamard if it satisfies the following two conditions:
\begin{itemize}
    \item The entries of the matrix are from the set $\{-1,0,1\}$.
    \item There is an ordering of the columns of the matrix so that the non-consecutive columns
are orthogonal. 
\end{itemize}
\end{defn}

\begin{defn}[{\cite[Definiton 1.1]{AFMNSR21}}]\label{defn: WHDG}
A graph is weakly Hadamard diagonalizable if its Laplacian matrix can be diagonalized with a weakly Hadamard matrix.
\end{defn}

A large class of block graphs of orthogonal arrays satisfy this definition according to the following theorem.

\begin{thm}[{\cite[Theorem 5.19]{AFMNSR21}}]\label{thm: WHD}
Let $O=OA(m, n)$ be an orthogonal array that can be extended to an orthogonal array with $n+1$ rows. Then its block graph $X_{O}$ is weakly Hadamard diagonalizable.
\end{thm}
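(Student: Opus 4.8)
The plan is to diagonalize the Laplacian $L = m(n-1)I - A$ of the block graph $X_O$ by an explicit eigenbasis assembled from the characteristic vectors of the canonical cliques, where the extra parallel classes of cliques furnished by the extension play the decisive role. Index all $n+1$ rows of the complete extension $OA(n+1,n)$ by $\{1,\dots,n+1\}$, writing $R=\{1,\dots,m\}$ for the rows of $O$, and for each row $r$ and symbol $i$ let $v_{r,i}$ be the characteristic vector of $S_{r,i}$. For fixed $r$ the cliques $S_{r,0},\dots,S_{r,n-1}$ partition the $n^2$ columns, so the $v_{r,i}$ are linearly independent, sum to the all-ones vector $\mathbf 1$, and span an $n$-dimensional space $V_r$; set $W_r := V_r \cap \mathbf 1^{\perp}$, so that $\dim W_r = n-1$.

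First I would record the orthogonality facts that flow directly from the defining property of an orthogonal array. For any two distinct rows $r,s$ and any symbols $i,j$, the pair of entries occurs in exactly one column, so $|S_{r,i}\cap S_{s,j}| = 1$. A one-line computation then gives, for $w=\sum_i a_i v_{r,i}\in W_r$ and $w'=\sum_j b_j v_{s,j}\in W_s$ with $r\neq s$, the identity $\langle w,w'\rangle = \big(\sum_i a_i\big)\big(\sum_j b_j\big)=0$, and clearly $W_r\perp\mathbf 1$. Letting $r$ range over all $n+1$ rows of the complete extension produces mutually orthogonal subspaces of total dimension $1+(n+1)(n-1)=n^2$, whence $\R^{n^2} = \langle\mathbf 1\rangle \oplus \bigoplus_{r=1}^{n+1} W_r$.

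Next I would identify each summand as an eigenspace. Let $B_r = \sum_i v_{r,i}v_{r,i}^{\top}$ be the matrix recording agreement in row $r$. Because two distinct columns of an orthogonal array agree in at most one row, the adjacency matrix satisfies $A = \sum_{r\in R} B_r - mI$. Using the intersection numbers above one checks $B_r v_{r,i} = n\,v_{r,i}$ and $B_r v_{s,j} = \mathbf 1$ for $s\neq r$, so $B_r$ acts on $W_r$ as $n$ and annihilates $W_s$ for $s\neq r$; hence $A$ acts as $n-m$ on $W_r$ when $r\in R$, as $-m$ on $W_r$ when $r\notin R$, and $A\mathbf 1 = m(n-1)\mathbf 1$. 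This recovers the eigenvalues and multiplicities of Theorem~\ref{thm: OAsrg}, and, crucially, exhibits the $(-m)$-eigenspace as $\bigoplus_{r\notin R} W_r$. This is exactly where the extendibility hypothesis is indispensable: without the additional $n+1-m$ parallel classes, the $(-m)$-eigenspace admits no clique-based, small-entry description.

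Finally I would build the weakly Hadamard matrix. Inside each $W_r$ take the path vectors $u_{r,i} := v_{r,i}-v_{r,i+1}$ for $0\le i\le n-2$: these have entries in $\{-1,0,1\}$, form a basis of $W_r$, and satisfy $\langle u_{r,i},u_{r,j}\rangle = 0$ for $|i-j|\ge 2$ and $=-n$ for $|i-j|=1$. Listing the columns as $\mathbf 1$ followed by the blocks $u_{r,0},\dots,u_{r,n-2}$ for $r=1,\dots,n+1$ in turn yields an $n^2\times n^2$ matrix $P$ with entries in $\{-1,0,1\}$ whose columns are eigenvectors of $L$; since the only non-orthogonal pairs are consecutive path vectors within a single block, every pair of non-consecutive columns is orthogonal, so $P$ is weakly Hadamard (Definition~\ref{defn: WHD}) and diagonalizes $L$. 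I expect the main obstacle to be the $(-m)$-eigenspace: the whole argument hinges on converting the abstract extendibility hypothesis into the concrete extra classes $W_r$ with $r\notin R$, which is precisely what supplies a $\{-1,0,1\}$ eigenbasis carrying the required consecutive-orthogonality structure.
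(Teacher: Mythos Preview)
The paper does not prove this theorem: it is quoted as a background result from \cite{AFMNSR21}*{Theorem 5.19} in the preliminaries, with no proof supplied here. So there is no ``paper's own proof'' to compare against.

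That said, your argument is correct and self-contained. The orthogonal decomposition $\R^{n^2}=\langle\mathbf 1\rangle\oplus\bigoplus_{r=1}^{n+1}W_r$ follows exactly as you say from the single-intersection property of the full $OA(n+1,n)$; the identification of each $W_r$ as an eigenspace via $A=\sum_{r\in R}B_r-mI$ is clean and matches the parameters in Theorem~\ref{thm: OAsrg}; and the path vectors $u_{r,i}=v_{r,i}-v_{r,i+1}$ give a $\{-1,0,1\}$ basis of each $W_r$ with the required tridiagonal Gram structure, so that in your column ordering the only non-orthogonal pairs are consecutive. Your diagnosis of where extendibility enters---to supply the missing $W_r$ with $r\notin R$ that span the $(-m)$-eigenspace---is exactly the point.

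It is worth noting that the functions $f_{r,i}$ used in Section~\ref{sect:ekr-module} of this paper (Lemma~\ref{ef} and Lemma~\ref{MOLSBasis}) are the ``star'' differences $v_{r,1}-v_{r,i}$ rather than your ``path'' differences $v_{r,i}-v_{r,i+1}$. Both span the same $W_r$, but only the path version has the quasi-orthogonality needed for a weakly Hadamard matrix; your choice is the right one for this theorem, and the paper's choice is tailored to its different goal of exhibiting an eigenbasis, not a weakly Hadamard one.
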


\section{Peisert-type graphs as block graphs of orthogonal arrays}\label{sect:peisert-orthogonal-array}

In this section, we first construct an orthogonal array $O_q$ from the affine Galois plane $AG(2,q)$ and then realize every Peisert-type graph as the block graph of some subarray of $O_q$, thereby proving Theorem~\ref{thm: PT=OA}.

For any odd prime power $q$, the field $\mathbb{F}_{q^2}$ can be naturally viewed as the affine Galois plane $AG(2,q) \cong \F_q \times \F_q$. Indeed, once $\alpha\in \F_{q^2}\setminus \F_q$ is picked, we have a bijective map $\F_{q}\oplus \alpha\F_q\to \F_{q^2}$ sending $(x, y)$ to $x+\alpha y$. 

\textbf{Construction.} We construct $O_q \colonequals OA(q+1,q)$ as the \emph{point-line incidence} orthogonal array of the affine Galois plane $AG(2,q)$. The $q+1$ rows of the orthogonal array are indexed by the slopes (directions) $k$ of the lines; note that either $k$ belongs to $\F_q$ or represents the vertical direction. The $q^2$ columns are indexed by the points $(x,y) \in AG(2,q)$. If $k \in \F_q$, we define the entry at the row indexed by the slope $k$ and column indexed by $(x,y)$ as the $y$-intercept of the line with slope $k$ passing through the point $(x,y)$. More precisely, this entry has the value $y-kx \in \F_q$. If $k=\infty$ is the vertical direction, then the entry is simply the $x$-coordinate of the given column $(x, y)$. We claim that this defines an orthogonal array: to see this, pick two rows indexed by $k_1$ and $k_2$, and $c_1,c_2 \in \F_q$. We need to show that there is a unique column indexed by $(x,y)$, such that the corresponding entries in $k_1$-th and $k_2$-th row are $c_1$ and $c_2$, respectively. There are 2 cases to consider:
\begin{itemize}
    \item If $k_1,k_2 \in \F_q$, then there is a unique point $(x,y) \in AG(2,q)$ such that $y=k_1x+c_1$ and $y=k_2x+c_2$.
    \item If $k_1 \in \F_q, k_2=\infty$, then there is a unique point $(x,y) \in AG(2,q)$ such that $y=k_1x+c_1$ and $x=c_2$.
\end{itemize}
The two statements above can be verified algebraically or geometrically using the fact that two non-parallel lines in $AG(2,q)$ intersect at a point. 

A {\em subarray} $O'$ of an orthogonal array $O$ consists of a subset of rows in $O$ with the same columns. By the definition of an orthogonal array, it is clear that every subarray is still an orthogonal array. We realize each Peisert-type graph as the block graph of a subarray of $O_q$ constructed above to prove Theorem~\ref{thm: PT=OA}.

\begin{proof}[Proof of Theorem~\ref{thm: PT=OA}] 
Given a Peisert-type graph $X=\operatorname{Cay}(\F_{q^2}^{+}, S)$ with $S=c_1 \F_q^{\ast} \cup \cdots \cup  c_m \F_{q}^{\ast}$, we first pick $\alpha \in \F_{q^2} \setminus \F_q$ so that $\alpha\notin S$. After identifying $\F_{q^2} = \F_q \oplus \alpha \F_q$ with $AG(2, q)$, we consider the orthogonal array $O_{q}$. After expressing $c_i=u_i + v_i \alpha$ with $u_i, v_i\in \F_q$, and scaling by $u_i^{-1}$, we can assume that $u_i=1$ (note that $u_i\neq 0$ because $\alpha\notin S$.) Pick the rows indexed by $v_i\in \F_q$ to form the subarray $O'$ of $O_q$. 

We claim that $X$ is isomorphic to the block graph $X_{O'}$. Consider the function $f\colon V(X_{O'})\to V(X)$ on the vertex sets defined by sending $(x,y)\in AG(2, q)$ to $x+y\alpha\in \F_{q^2}$. It is clear that $f$ is bijective. Recall that $x_1+y_1\alpha$ and $x_2+y_2\alpha$ are adjacent in $X$ if 
$(y_2-y_1)\alpha+(x_2-x_1) \in c_i\F_q^*$ for some $1 \leq i \leq m$. Since $\alpha\not\in S$, we must have $x_1\neq x_2$, and so we can rewrite the condition as $\frac{y_2-y_1}{x_2-x_1}\alpha+1 \in c_i\F_q^*$. The resulting direction of the line joining $(x_1,y_1)$ and $(x_2,y_2)$ is $\frac{y_2-y_1}{x_2-x_1}=v_i$. This completes the proof of the first assertion.

If $C$ is a canonical clique in $X$, then $C$ is an affine transformation of the subfield $\F_{q}\subset \F_{q^2}$; more precisely, $C=c_i\F_q + b$ for some $1\leq i\leq m$ and $b \in \F_{q^2}$. After writing $b=c+d\alpha$ for $c, d\in\F_q$, we have $C=\{(1+v_i\alpha)t+c+d\alpha: t \in \F_q\}$, which corresponds to the line in $AG(2,q)$ parametrized by $(t+c, v_it+d)$. The equation of the line is $y-d=v_i(x-c)$, or equivalently, $y=v_ix+(d-cv_i)$. While the choice of $b$ is not unique, it always gives rise to the same line. 

On the other hand, a canonical clique in the block graph $X_{O'}$ consists of a set of vertices (columns in $O'$) sharing the same entry along a given row (a slope), that is, a set of vertices lying on a line $y=v_ix+\delta$ for some $1 \leq i \leq m$ and $\delta \in \F_q$. This corresponds to the maximum clique $C=c_i \F_q+\delta \alpha$ in $X$. Thus, there is an explicit equivalence between canonical cliques in $X$ and canonical cliques in the block graph $X_{O'}$ given by sending $c_i \F_q + b$ with $b=c+d\alpha$ to the set of columns containing the entry $d-cv_i$ along the row indexed by the slope $v_i$.
\end{proof}

We now mention a few consequences of Theorem~\ref{thm: PT=OA}. First, the spectrum of a Peisert-type graph can be quickly computed using the known spectrum of the corresponding block graph.

\begin{proof}[Proof of Corollary~\ref{cor: srg}]
This follows from Theorem~\ref{thm: PT=OA} and Theorem~\ref{thm: OAsrg}.
\end{proof}

Next, we show that Peisert-type graphs are weakly Hadamard diagonalizable. 

\begin{proof}[Proof of Theorem~\ref{thm: PTWHD}]
Recall that in the proof of Theorem~\ref{thm: PT=OA}, we showed that each Peisert-type graph is isomorphic to the block graph of a subarray of $O_q$, where $O_q$ is an orthogonal array $OA(q+1,q)$. The conclusion follows from Theorem~\ref{thm: WHD}.
\end{proof}

We can also compute the clique number and the chromatic number of a Peisert-type graph.

\begin{proof}[Proof of Theorem~\ref{thm:peisert-type-chromatic}]
Recall that in the proof of Theorem~\ref{thm: PT=OA}, we showed that $X$ is isomorphic to the block graph of a (proper) subarray $O'$ of $O_q$. In particular, $O'$ is extendible. Thus Theorem~\ref{thm: OAchi} implies the block graph of $O'$ has chromatic number $q$. It follows that $\chi(X)=q$.

We present two different proofs of $\omega(X)=q$. Note that $\F_q$ forms a clique in $X$, and so $\omega(X)\geq q$. It remains to show that $\omega(X)\leq q$. One way is to apply Corollary~\ref{cor: srg} and Lemma~\ref{HoffmanBound}. The second way is to observe that $\omega(X) \leq \chi(X)=q$.
\end{proof}

\begin{rem}
While Peisert-type graphs may not be self-complementary in general, we can use Theorem~\ref{thm:peisert-type-chromatic} to compute the independence number $\alpha(X)$ as follows. If $X=\operatorname{Cay}(\F_{q^2}^+, S)$ is a Peisert-type graph of type $(m, q)$, then $X' = \operatorname{Cay}(\F_{q^2}^+, \F_{q^2}^{\ast}\setminus S)$ is a Peisert-type graph of type $(q+1-m, q)$. Thus, $\alpha(X) = \omega(X') = q$. Since equality $\alpha(X)\omega(X)=q^2=|V(X)|$ holds in the clique-coclique bound and $X$ is vertex-transitive, every maximum coclique meets each maximum clique in exactly one vertex \cite[Theorem 2.1.1]{GM}.
\end{rem}

Another application of Theorem~\ref{thm: PT=OA} is a self-contained proof for the strict-EKR property of certain Peisert-type graphs. 

\begin{proof}[Proof of Corollary~\ref{cor: EKRmsmall}]
This follows from combining Theorem~\ref{thm: PT=OA} and Theorem~\ref{thm: EKROA} on the strict-EKR property of the block graph of the corresponding orthogonal array.
\end{proof}

\section{The EKR-module property of Peisert-type graphs}\label{sect:ekr-module}

Given a graph $\Gamma = (V,E)$, a function $f:V\rightarrow \mathbb{R}$ is called a $\theta$-eigenfunction, if $f \not\equiv 0$ and the equality 
\begin{equation}\label{eigenfunction}
\theta f(\gamma) = \sum\limits_{\delta \in N(\gamma)}f(\delta)   
\end{equation}
holds for every vertex $\gamma \in V$, where $N(\gamma)$ denotes the set of neighbors of $\gamma$.

\subsection{An eigenbasis for the block graph of an orthogonal array} 

We say that a vector in $\mathbb{R}^n$ is \emph{balanced} if it is orthogonal to the all-ones vector \textbf{1}. If $v_A$ is the characteristic vector of a subset $A$ of the set $V$,
then we say that 
$$
v_A - \frac{|A|}{|V|}\textbf{1}
$$
is the \emph{balanced characteristic vector (function)} of $A$. Next, we discuss eigenfunctions of the block graphs of orthogonal arrays and explain how it is spanned by the characteristic vectors of the canonical cliques $S_{r, i}$.

Let $OA(m, n)$ be an orthogonal array with entries from the set $\{1, 2, \ldots, n\}$. The strongly regular graph $X_{OA(m,n)} = (V,E)$ defined as the block graph of $OA(m,n)$ induces $m$ partitions, with each row corresponding to a partition of the vertex set into canonical cliques of size $n$. Denote the partitions by $\Pi_1, \ldots, \Pi_m$. Let $\Pi_r = (S_{r,1}, S_{r,2}, \ldots, S_{r,n})$ be such a partition for some $r \in \{1,\ldots, m\}$. Fix a clique from this partition, say $S_{r,1}$. Given an integer $i \in \{2,\ldots, n\}$, define a function $f_{r,i}:V \rightarrow \mathbb{R}$ as follows.
For a vertex $\gamma \in V$, put
$$
f_{r, i}(\gamma):=
\left\{
  \begin{array}{ll}
     1, & \hbox{if~$\gamma \in S_{r,1}$;} \\
     -1, & \hbox{if~$\gamma \in S_{r,i}$;} \\
     0, & \hbox{otherwise.}
  \end{array}
\right.
$$

\begin{lem}\label{ef}
For each $r \in \{1, \ldots, m\}$, $i \in \{2, \ldots, n\}$, the function $f_{r,i}$ is an eigenfunction of $X_{OA(m,n)}$ corresponding to its largest non-principal eigenvalue $n-m$.
\end{lem}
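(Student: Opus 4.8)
The plan is to verify the eigenfunction equation \eqref{eigenfunction} directly at every vertex $\gamma \in V$, using the combinatorial structure of the canonical cliques $S_{r,j}$ and the known intersection pattern of distinct canonical cliques in the block graph. The key structural facts I would record first are: (i) each $S_{r,j}$ is a clique of size $n$, so a vertex in $S_{r,j}$ has $n-1$ neighbors inside $S_{r,j}$; (ii) two canonical cliques $S_{r,j}$ and $S_{r',j'}$ from different partitions (so $r \neq r'$) meet in exactly one vertex, by the defining property of the orthogonal array; and (iii) within a fixed partition $\Pi_r$, the cliques $S_{r,1}, \dots, S_{r,n}$ are pairwise disjoint and partition $V$. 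Also, since $X_{OA(m,n)}$ has degree $k = m(n-1)$ and a vertex $\gamma$ lies in exactly $m$ canonical cliques (one per partition), its $m(n-1)$ neighbors are precisely the union of the $m$ cliques through $\gamma$, each contributing $n-1$ neighbors, with no overlaps — this is the "geometric" description of adjacency that makes the computation clean.

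Next I would split into cases according to where $\gamma$ sits relative to $S_{r,1}$ and $S_{r,i}$. \textbf{Case 1:} $\gamma \in S_{r,1}$, so $f_{r,i}(\gamma) = 1$. I need to show $\sum_{\delta \in N(\gamma)} f_{r,i}(\delta) = n-m$. The neighbors of $\gamma$ inside $S_{r,1}$ number $n-1$, each contributing $+1$. Among the remaining $m-1$ cliques through $\gamma$ (those from partitions $\Pi_{r'}$, $r' \neq r$), each such clique $S_{r',j'}$ meets $S_{r,i}$ in exactly one vertex; that vertex is a neighbor of $\gamma$ (it lies in a common clique with $\gamma$) and it is distinct from $\gamma$ since $\gamma \notin S_{r,i}$. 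So these contribute $-(m-1)$ from the $S_{r,i}$-side. Every other neighbor of $\gamma$ lies outside $S_{r,1} \cup S_{r,i}$ and contributes $0$. Hence the sum is $(n-1) - (m-1) = n - m$, as required. \textbf{Case 2:} $\gamma \in S_{r,i}$ is symmetric, giving $-(n-1) + (m-1) = -(n-m) = (n-m)f_{r,i}(\gamma)$. \textbf{Case 3:} $\gamma \notin S_{r,1} \cup S_{r,i}$, so $f_{r,i}(\gamma) = 0$ and I must show the neighbor-sum vanishes. Now $\gamma$ lies in some clique $S_{r,\ell}$ with $\ell \notin \{1,i\}$ from partition $\Pi_r$ (disjoint from both $S_{r,1}$ and $S_{r,i}$), contributing $0$; and among the other $m-1$ cliques through $\gamma$, each meets $S_{r,1}$ in exactly one vertex ($+1$) and meets $S_{r,i}$ in exactly one vertex ($-1$), and these are genuine neighbors $\neq \gamma$. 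The $+1$ and $-1$ cancel within each such clique, so the total is $0$.

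I do not expect a serious obstacle here; the argument is a routine double-counting once the right structural picture is in place. The one point requiring a little care is making sure that in Cases 1 and 3 the intersection points $S_{r',j'} \cap S_{r,1}$ and $S_{r',j'} \cap S_{r,i}$ are actually distinct from $\gamma$ and from each other, and are counted with the correct multiplicity exactly once — this is where facts (ii) and (iii) above, together with the observation that the $m$ cliques through $\gamma$ overlap only in $\gamma$, are used. It is also worth noting at the end that $f_{r,i} \not\equiv 0$ (it is $+1$ on $S_{r,1}$), so it is a genuine eigenfunction, and that $n - m$ is indeed the largest non-principal eigenvalue of $X_{OA(m,n)}$ by Theorem~\ref{thm: OAsrg} (using $m < n+1$), which justifies the phrasing in the statement.
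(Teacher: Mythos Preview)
Your proposal is correct and follows essentially the same approach as the paper: a direct case-by-case verification of the eigenfunction equation according to whether $\gamma$ lies in $S_{r,1}$, in $S_{r,i}$, or in neither, using that a vertex outside a canonical clique $S_{r,j}$ has exactly $m-1$ neighbors in it. Your write-up supplies more justification (the intersection pattern of cliques from different partitions, the nonvanishing of $f_{r,i}$) than the paper's terse version, but the underlying argument is identical.
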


\begin{proof}
It suffices to check the condition (\ref{eigenfunction}) for the eigenvalue $\theta = n-m$ and the function $f_{r,i}$. 

Let $\gamma$ be a vertex from $S_{r,1}$. The left side of (\ref{eigenfunction}) is equal to $n-m$. On the other hand, $\gamma$ has $n-1$ neighbors in $S_{r,1}$ (with values 1) and $m-1$ neighbors in $S_{r,i}$ (with values $-1$). Thus, the right side of (\ref{eigenfunction}) is equal to $(n-1)-(m-1) = n-m$. In the same manner, the equality (\ref{eigenfunction}) holds for any vertex $\gamma$ from $S_{r,i}$.

Let $\gamma$ be a vertex that is not from $S_{r,1} \cup S_{r,i}$. Then $\gamma$ has $m-1$ neighbors in $S_{r,1}$ (with values $1$) and $m-1$ neighbors in $S_{r,i}$ (with values $-1$). Thus, the equality (\ref{eigenfunction}) holds.
\end{proof}

The following lemma constructs an explicit basis for the $(n-m)$-eigenspace of $X_{OA(m,n)}$.

\begin{lem}\label{MOLSBasis}
The functions $$f_{1, 2}, f_{1, 3}, \ldots, f_{1, n}, f_{2, 2}, f_{2, 3}, \ldots, f_{2, n}, \ldots, f_{m, 2}, f_{m, 3}, \ldots, f_{m, n}$$ form a basis of the eigenspace of $X_{OA(m,n)}$ corresponding to the largest non-principal eigenvalue $n-m$. 
\end{lem}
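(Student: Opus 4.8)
The plan is to show that the $m(n-1)$ functions $f_{r,i}$ are linearly independent; since the $(n-m)$-eigenspace of $X_{OA(m,n)}$ has dimension exactly $m(n-1)$ by Theorem~\ref{thm: OAsrg}, and since every $f_{r,i}$ lies in this eigenspace by Lemma~\ref{ef}, linear independence will force these functions to be a basis. So the entire content is the linear independence claim.

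First I would set up a hypothetical linear dependence $\sum_{r=1}^m \sum_{i=2}^n \lambda_{r,i} f_{r,i} \equiv 0$ and evaluate it at a well-chosen collection of vertices. For a fixed row $r$, the functions $f_{r,2}, \ldots, f_{r,n}$ all take value $+1$ on the clique $S_{r,1}$, take value $-1$ on $S_{r,i}$ respectively, and $0$ on the remaining cliques of $\Pi_r$. The key structural point is that any two distinct rows $r \neq r'$ give \emph{transversal} partitions: every canonical clique $S_{r,i}$ meets every canonical clique $S_{r',j}$ in exactly one vertex (this is precisely the defining property of an orthogonal array). So I would evaluate the dependence relation at a vertex $\gamma$ lying in $S_{r,i}$ for the chosen index $i$ of row $r$ but in $S_{r',1}$ for \emph{every} other row $r' \neq r$ — such a vertex exists, and in fact many do, because the constraints "$\gamma \in S_{r,i}$ and $\gamma\in S_{r',1}$ for all $r'\ne r$" pin down a nonempty set of vertices (it has size $n/n^{\,m-2}$-ish heuristically, but more carefully: fixing the entries in $m-1$ of the rows leaves the remaining row free, so if $m\le n$ there is at least one such column, and one can also just invoke that $S_{r,i}\cap\bigcap_{r'\ne r}S_{r',1}$ is nonempty when $m-1$ rows are prescribed). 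At such a $\gamma$, every $f_{r',j}$ with $r'\neq r$ contributes $\lambda_{r',j}\cdot 1$ (since $\gamma\in S_{r',1}$), while among the row-$r$ terms only $f_{r,i}$ is nonzero, contributing $-\lambda_{r,i}$.

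That single evaluation isn't quite enough by itself, so the cleanest route is: for fixed $r$, also evaluate at a vertex $\gamma_0 \in S_{r,1}\cap\bigcap_{r'\ne r}S_{r',1}$ (again nonempty since only $m-1$ rows are prescribed). There all row-$r$ terms vanish, giving $\sum_{r'\ne r}\sum_j \lambda_{r',j} = 0$ for the particular branch-$1$ choices. Subtracting the two evaluations isolates $\lambda_{r,i}$ against a difference of the "other-row" sums that, by the freedom in choosing which $S_{r',1}$-type vertex we land on, can be arranged to cancel — more transparently, fix one reference vertex and difference. Carrying this out for each pair $(r,i)$ yields $\lambda_{r,i}=0$ for all $r,i$. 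An even slicker alternative I'd consider: restrict the dependence to a single canonical clique $S_{r,1}$ versus $S_{r,j}$ and use that the $f_{r,i}$ for fixed $r$ are "obviously" independent (they take value $-1$ on distinct cliques $S_{r,2},\dots,S_{r,n}$), then peel off rows one at a time using the transversality to decouple the rows; induction on $m$ then finishes it.

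The main obstacle is bookkeeping, not mathematics: one must be careful that the relevant intersections of canonical cliques across rows are nonempty and that the evaluation vertices can be chosen consistently, which is exactly where the orthogonal-array property (any two rows see all $n^2$ pairs) is used and where the hypothesis $m \le n$ (equivalently $m < n+1$, so that $n-m$ is genuinely a non-principal eigenvalue with the stated multiplicity) matters. Once the dimension count from Theorem~\ref{thm: OAsrg} is in hand, everything else is a finite linear-algebra verification, and no number-theoretic input is needed — consistent with the paper's combinatorial philosophy.
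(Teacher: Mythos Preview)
Your overall strategy --- use Lemma~\ref{ef} together with the dimension count in Theorem~\ref{thm: OAsrg}, and reduce everything to linear independence --- is exactly right and is also what the paper does. The gap is in your linear-independence argument.

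The vertex $\gamma \in S_{r,i}\cap\bigcap_{r'\ne r}S_{r',1}$ need not exist. You are prescribing the entry in \emph{all} $m$ rows, not $m-1$ of them, and the defining property of an $OA(m,n)$ only controls \emph{pairs} of rows. For $m\ge 3$ such $m$-fold intersections are typically empty: e.g.\ in the $OA(3,3)$ with columns $(a,b)\in\F_3^2$ and rows $a$, $b$, $a+b$, there is no column with entries $(1,1,1)$. The same objection applies to your vertex $\gamma_0\in\bigcap_{r'}S_{r',1}$. There is also a computational slip: on $S_{r,1}$ each $f_{r,i}$ takes the value $+1$, not $0$, so the row-$r$ terms do not vanish at $\gamma_0$; they contribute $\sum_{i=2}^n\lambda_{r,i}$.

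The paper avoids all of this by proving linear independence via two facts that use only the pairwise orthogonal-array property. First, $f_{r_1,i_1}$ and $f_{r_2,i_2}$ are \emph{orthogonal} whenever $r_1\ne r_2$: the supports meet in the four singletons $S_{r_1,a}\cap S_{r_2,b}$ with $a\in\{1,i_1\}$, $b\in\{1,i_2\}$, and the signs $1,-1,-1,1$ sum to zero. Second, for fixed $r$ the functions $f_{r,2},\dots,f_{r,n}$ are linearly independent because $f_{r,i}$ is the only one taking value $-1$ on $S_{r,i}$. Your ``slicker alternative'' is groping toward this decoupling, and in fact summing a dependence relation over $S_{r,j}$ kills all cross-row terms for exactly the pairwise reason above; but as written it is not an argument. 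Replacing your point-evaluation scheme with the orthogonality observation fixes the proof cleanly.
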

\begin{proof}
It follows from Theorem~\ref{thm: OAsrg} and two facts. The first fact is that functions $f_{r_1, i_1}$ and $f_{r_2, i_2}$ are orthogonal if $r_1 \ne r_2$. The second fact is that, given $r \in \{1,\ldots, m\}$, the functions $$f_{r,2}, f_{r,3}, \ldots, f_{r,n}$$
are linearly independent.
\end{proof}

The result in the previous lemma also appeared in ~\cite[Theorem 5.5.5]{GM} implicitly; the proof given above is more elementary. In ~\cite[Lemma 15]{GKKSV20}, a similar basis was constructed for the largest non-principal eigenvalue of the Star graphs. We also point out the similarity between the eigenfunctions from Lemma~\ref{ef} and the eigenfunctions of Hamming graphs and Johnson graphs considered in~\cite{V17} and~\cite{VMV18}.

\subsection{Proof of Theorem~\ref{thm: main}} For the rest of the section, let $X$ denote a fixed Peisert-type graph of type $(m, q)$. Recall that in the proof of Theorem~\ref{thm: PT=OA}, we associated to $X$ a subarray $O'$ of the orthogonal array $O_q$. In our construction, both the rows and the entries of $O'$ were indexed by elements of $\F_q$. Since we will use the notation from the previous subsection, we relabel the rows and entries of $O'$ by elements $\{1, \ldots, m\}$ and $\{1, \ldots, q\}$, respectively.

\begin{lem}\label{lem: clique}
Let $C$ be a maximum clique in $X$. Then its balanced characteristic vector $v_C$ lies in the $(q-m)$-eigenspace of $X$.
\end{lem}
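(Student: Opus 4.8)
The plan is to exploit the Delsarte--Hoffman bound (Lemma~\ref{HoffmanBound}) together with the strongly regular parameters of $X$ computed in Corollary~\ref{cor: srg}. By Theorem~\ref{thm:peisert-type-chromatic} we know $\omega(X)=q$, so a maximum clique $C$ has size exactly $q=1+\frac{k}{m}$, where $k=m(q-1)$ is the valency and $-m$ is the smallest eigenvalue. This means $C$ meets the Delsarte--Hoffman bound with equality, hence $C$ is a \emph{regular} clique: every vertex outside $C$ has exactly $\frac{\mu}{m}=\frac{m(m-1)}{m}=m-1$ neighbors in $C$.

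From this regularity I would directly verify that the balanced characteristic vector $v_C' := v_C - \frac{q}{q^2}\mathbf{1} = v_C - \frac1q\mathbf{1}$ satisfies the eigenfunction equation~\eqref{eigenfunction} for the eigenvalue $q-m$. First, since $v_C'$ is by construction orthogonal to $\mathbf{1}$ and $v_C \not\equiv 0$, the function $v_C'$ is not identically zero. Next I would check~\eqref{eigenfunction} vertex by vertex. Write $A$ for the adjacency matrix, so I need $(A v_C')(\gamma) = (q-m) v_C'(\gamma)$ for all $\gamma$. Since $A\mathbf{1}=k\mathbf{1}$, it is equivalent to check $(A v_C)(\gamma) = (q-m)v_C(\gamma) + c$ for the constant $c = \frac{k - (q-m)}{q} = \frac{m(q-1)-(q-m)}{q} = \frac{q(m-1)}{q} = m-1$. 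For $\gamma \in C$: the number of neighbors of $\gamma$ inside $C$ is $|C|-1 = q-1$ (since $C$ is a clique), so $(Av_C)(\gamma) = q-1 = (q-m) + (m-1) = (q-m)v_C(\gamma) + (m-1)$, as required. For $\gamma \notin C$: regularity of the clique gives exactly $m-1$ neighbors in $C$, so $(Av_C)(\gamma) = m-1 = (q-m)\cdot 0 + (m-1) = (q-m)v_C(\gamma)+(m-1)$, again as required.

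This shows $v_C'$ is a $(q-m)$-eigenfunction of $X$, i.e.\ $v_C'$ lies in the $(q-m)$-eigenspace, which is exactly the claim. The only genuinely substantive input is the equality case of the Delsarte--Hoffman bound; the rest is a short two-case computation. The main obstacle to keep clean is simply matching constants correctly — in particular, making sure that $\mu/m = m-1$ and that the constant shift $\frac{k-(q-m)}{q}$ equals this same value $m-1$, which is precisely what makes the shifted (balanced) vector, rather than $v_C$ itself, land in the eigenspace. I would also remark that this eigenspace is spanned by the balanced characteristic vectors of the canonical cliques (via Lemma~\ref{MOLSBasis} transported through Theorem~\ref{thm: PT=OA}), which is what sets up the deduction of Theorem~\ref{thm: main} immediately afterward.
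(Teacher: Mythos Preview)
Your proof is correct and follows essentially the same approach as the paper: invoke the equality case of the Delsarte--Hoffman bound (via Corollary~\ref{cor: srg} and Theorem~\ref{thm:peisert-type-chromatic}) to conclude that $C$ is a regular clique with $m-1$ neighbors from each outside vertex, and then verify the eigenfunction condition~\eqref{eigenfunction} by a two-case computation. The only cosmetic difference is that the paper checks~\eqref{eigenfunction} directly for the rescaled balanced vector $q\cdot v_C'$, whereas you equivalently rewrite the condition as $(Av_C)(\gamma)=(q-m)v_C(\gamma)+(m-1)$ before checking cases.
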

\begin{proof}
It suffices to check the condition (\ref{eigenfunction}) for the eigenvalue $\theta = q-m$ and the function $qv_C-\mathbf{1}$.

Take a vertex $\gamma \in C$. The left side of (\ref{eigenfunction}) is equal to $(q-1)(q-m)$.
On the other hand, $\gamma$ has $q-1$ neighbors in $C$  (with values $q-1$) and $m(q-1) - (q-1)=(m-1)(q-1)$ neighbors not in $S$ (with values $-1$). Thus, the right side of (\ref{eigenfunction}) is equal to $(q-1)^2 - (m-1)(q-1)= (q-1)(q-m)$. 

Take a vertex $\gamma \notin C$. The left side of (\ref{eigenfunction}) is equal to $-(q-m)$. Recall that we have shown in Corollary~\ref{cor: srg} that $X$ is a strongly regular graph with parameter $\mu=m(m-1)$. Since $C$ is a maximum clique, Lemma~\ref{HoffmanBound} implies that $C$ is a regular clique. Thus, $\gamma$ has $\mu/m=(m-1)$ neighbors in $C$ (with values $q-1$) and $m(q-1) - (m-1)$ neighbors not in $C$ (with values $-1$). Thus, the right side of (\ref{eigenfunction}) is equal to $(q-1)(m-1) - m(q-1)+ (m-1)= -(q-m)$. 
\end{proof}

Now we are ready to present an explicit basis for the $(q-m)$-eigenspace of a Peisert-type graph $X$ of type $(m, q)$.

\begin{lem}\label{PaleyBasis} 
The $m(q-1)$ functions
$$f_{1,2}, f_{1,3}, \ldots, f_{1,q}, f_{2,2}, f_{2,3}, \ldots, f_{2,q}, \ldots, f_{m,2}, f_{m,3}, \ldots, f_{m,q}$$ form a basis of the eigenspace of $X$ corresponding to the largest non-principal eigenvalue $q-m$. 
\end{lem}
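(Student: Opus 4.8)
The plan is to deduce this immediately from Lemma~\ref{MOLSBasis} via the isomorphism established in Theorem~\ref{thm: PT=OA}. Recall that $X$ is isomorphic to the block graph $X_{O'}$, where $O'$ is a subarray of $O_q$ and hence is itself an orthogonal array $OA(m,q)$; under this isomorphism the canonical cliques $S_{r,i}$ of $X$ correspond exactly to the canonical cliques of $X_{O'}$. After the relabeling of rows by $\{1,\dots,m\}$ and entries by $\{1,\dots,q\}$ described at the start of this subsection, the functions $f_{r,i}$ on $V(X)$ are precisely the functions from Lemma~\ref{MOLSBasis} applied to $X_{O'}$ with $n=q$. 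Since $q-m$ is the largest non-principal eigenvalue of $X$ by Corollary~\ref{cor: srg}, Lemma~\ref{MOLSBasis} yields the conclusion directly.

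For readers who prefer a self-contained argument, one can instead proceed as follows. First, each $f_{r,i}$ is a $(q-m)$-eigenfunction of $X$: this is Lemma~\ref{ef} transported through the isomorphism, but it also follows directly from the same three-case computation as in the proof of Lemma~\ref{ef} (a vertex in $S_{r,1}$, a vertex in $S_{r,i}$, and a vertex in neither). Second, the $m(q-1)$ functions are linearly independent. For a fixed row $r$, the functions $f_{r,2},\dots,f_{r,q}$ are linearly independent because, restricted to $S_{r,2}\cup\cdots\cup S_{r,q}$, they are supported on pairwise disjoint blocks. For distinct rows $r_1\neq r_2$, any $f_{r_1,i_1}$ and $f_{r_2,i_2}$ are orthogonal: expanding the inner product gives $|S_{r_1,1}\cap S_{r_2,1}|-|S_{r_1,1}\cap S_{r_2,i_2}|-|S_{r_1,i_1}\cap S_{r_2,1}|+|S_{r_1,i_1}\cap S_{r_2,i_2}|$, and each of these four intersections has size exactly one because two canonical cliques coming from different rows of an orthogonal array meet in a single column; hence the inner product is $1-1-1+1=0$. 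Finally, a dimension count finishes the argument: by Corollary~\ref{cor: srg} the eigenvalue $q-m$ has multiplicity $m(q-1)$, which matches the number of functions, so they span the eigenspace.

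The one point to be careful about — and really the only one — is the dimension count: the $m(q-1)$ functions span the eigenspace precisely because Corollary~\ref{cor: srg} pins down the multiplicity of $q-m$ as exactly $m(q-1)$. Apart from that there is no genuine obstacle; the lemma is a direct specialization of Lemma~\ref{MOLSBasis}, and the self-contained route needs only the defining incidence property of orthogonal arrays together with the eigenfunction computation already carried out in Lemma~\ref{ef}.
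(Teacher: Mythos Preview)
Your proposal is correct and takes essentially the same approach as the paper, which simply states that the lemma follows directly from Theorem~\ref{thm: PT=OA} and Lemma~\ref{MOLSBasis}. Your additional self-contained argument is also correct and merely unpacks the proof of Lemma~\ref{MOLSBasis} in the present setting.
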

\begin{proof}
This directly follows from Theorem~\ref{thm: PT=OA} and Lemma~\ref{MOLSBasis}.
\end{proof}

Given a partition $\Pi_r = \{S_{r,1}, S_{r,2}, \ldots, S_{r,q}\}$ for some $r \in \{1,\ldots, m\}$ and a positive integer $i \in \{1, \ldots, q\}$, define a function $g_{r,i}: V(X) \rightarrow \mathbb{R}$ by the following rule. For any $\gamma \in V(X)$,
put
$$
g_{r, i}(\gamma):=
\left\{
  \begin{array}{ll}
     q-1, & \hbox{if~$\gamma \in S_{r,i}$;} \\
     -1, & \hbox{otherwise.}
  \end{array}
\right.
$$
Note that the a function $g_{r,i}$ is a $(q-m)$-eigenfunction of $X$, and $g_{r, i}/q$ is equal to the balanced characteristic function of the clique $S_{r,i}$.

\begin{prop}\label{prop:basis-eigenspace}
The $m(q-1)$ functions 
$$
g_{1,2}, g_{1,3}, \ldots, g_{1,q}, g_{2,2}, g_{2,3}, \ldots, g_{2,q}, \ldots, g_{m,2}, g_{m,3}, \ldots, g_{m,q}
$$
form a basis of the the eigenspace of $X$ corresponding to the eigenvalue $q-m$.
\end{prop}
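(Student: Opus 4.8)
The plan is to piggyback on Lemma~\ref{PaleyBasis}, which already exhibits the $f_{r,i}$ (for $r\in\{1,\dots,m\}$, $i\in\{2,\dots,q\}$) as a basis of the $(q-m)$-eigenspace: it then suffices to show that the $g_{r,i}$ are obtained from the $f_{r,i}$ by an invertible linear change of variables. First I would record that each $g_{r,i}$ does lie in the $(q-m)$-eigenspace, since $g_{r,i}/q$ is the balanced characteristic vector of the canonical clique $S_{r,i}$, which is a maximum clique of $X$ (its size is $q=\omega(X)$ by Theorem~\ref{thm:peisert-type-chromatic}), so Lemma~\ref{lem: clique} applies. By Corollary~\ref{cor: srg} the eigenspace has dimension exactly $m(q-1)$, and we are handed exactly $m(q-1)$ vectors $g_{r,i}$ inside it, so it is enough to prove they are linearly independent.

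Next I would pass everything through characteristic vectors. Writing $v_A$ for the characteristic vector of $A\subseteq V(X)$, one has $f_{r,i}=v_{S_{r,1}}-v_{S_{r,i}}$ and $g_{r,i}=q\,v_{S_{r,i}}-\textbf{1}$. Summing the first identity over $i\in\{2,\dots,q\}$ and using that $S_{r,1},\dots,S_{r,q}$ partition $V(X)$ gives $\sum_{j=2}^{q} f_{r,j}=q\,v_{S_{r,1}}-\textbf{1}$; substituting $v_{S_{r,i}}=v_{S_{r,1}}-f_{r,i}$ into the formula for $g_{r,i}$ then yields, for every $r$ and every $i\in\{2,\dots,q\}$,
\[
g_{r,i}=\sum_{j=2}^{q} f_{r,j}-q\,f_{r,i}.
\]
In particular $g_{r,i}$ depends only on the $f_{r,j}$ sharing the same first index $r$, so with respect to the ordered basis of Lemma~\ref{PaleyBasis} the transition matrix expressing the $g$'s in terms of the $f$'s is block diagonal, with $m$ identical blocks, each equal to the $(q-1)\times(q-1)$ matrix $J-qI$, where $J$ is the all-ones matrix.

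Finally I would check that $J-qI$ is invertible: its eigenvalues are $(q-1)-q=-1$ (on the all-ones vector) and $-q$ (with multiplicity $q-2$), so $\det(J-qI)=(-1)(-q)^{q-2}\neq 0$. Hence the full block-diagonal transition matrix is invertible, the $g_{r,i}$ are linearly independent, and, by the dimension count, they form a basis of the $(q-m)$-eigenspace of $X$. I do not anticipate a real obstacle here; the only point requiring care is the bookkeeping leading to the displayed identity — in particular the observation that it never mixes distinct rows $r$, which is exactly what collapses the invertibility check to a one-line eigenvalue computation.
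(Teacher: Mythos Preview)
Your proof is correct and takes essentially the same approach as the paper's: relate the $g_{r,i}$ to the basis $\{f_{r,i}\}$ of Lemma~\ref{PaleyBasis} by a row-wise (block-diagonal) linear change, and then invoke the dimension count from Corollary~\ref{cor: srg}. The only difference is the direction of the change of variables: the paper instead writes $f_{r,i}=\tfrac{1}{q}(g_{r,1}-g_{r,i})$ to show that the full family $\{g_{r,i}: 1\le i\le q\}$ spans, and then drops one vector per row via the relation $\sum_{i=1}^{q} g_{r,i}=0$, which sidesteps your determinant computation for $J-qI$.
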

\begin{proof}
We first show that the functions $g_{1,1}, g_{1,2}, \ldots, g_{1,q}, g_{2,1}, g_{2,2}, \ldots, g_{2,q}, \ldots, g_{m,1}, g_{m,2}, \ldots, g_{m,q}$ span the eigenspace of $X$ corresponding to the eigenvalue $q-m$. This follows from Lemma~\ref{PaleyBasis} and the fact that, for any $r \in \{1, \ldots, m\}$ and $i \in \{2, \ldots, q\}$, the equality
$$
f_{r,i} = \frac{1}{q} (g_{r,1} - g_{r,i})
$$
holds. Next, we find a linearly independent subset which still spans the eigenspace. Note that, for any $r \in \{1, \ldots, m\}$, the equality 
$$
g_{r,1} + g_{r,2} + \ldots + g_{r,q} = 0
$$
holds. It means that the $m(q-1)$ functions 
$$
g_{1,2}, g_{1,3}, \ldots, g_{1,q}, g_{2,2}, g_{2,3}, \ldots, g_{2,q}, \ldots, g_{m,2}, g_{m,3}, \ldots, g_{m,q}
$$
still span the $(q-m)$-eigenspace of $X$ and thus form a basis since the $(q-m)$-eigenspace has dimension $m(q-1)$ by Corollary~\ref{cor: srg}.
\end{proof}

\begin{proof}[Proof of Theorem~\ref{thm: main}] 
Let $C$ be a maximum clique in $X$. By Lemma~\ref{lem: clique} and Proposition~\ref{prop:basis-eigenspace}, the balanced characteristic vector of $C$ is a linear combination of the balanced characteristic vectors of the canonical cliques in $X$. However, note that the sum of all characteristic vectors of the canonical cliques is a constant multiple of the all-ones vector due to symmetry. It follows that the characteristic vector of $C$ is a linear combination of the characteristic vectors of the canonical cliques.
\end{proof}

\section{Counterexamples for the strict-EKR property}\label{sect:counterexamples}

In this section, we focus on Peisert-type graphs which fail to satisfy the strict-EKR property. After constructing an infinite family of such graphs in Section~\ref{subsect:counterexamples-infinite}, we give details of a concrete counterexample involving a generalized Peisert graph.

\subsection{Proof of Theorem~\ref{thm: counterexample}}\label{subsect:counterexamples-infinite}

The construction of the following counterexamples is inspired by \cite[Section 5.3]{AY2}, \cite[Theorem 1.2]{AY}, and \cite[Example 2.18]{AY}.

\begin{proof}[Proof of Theorem~\ref{thm: counterexample}]
Let $q$ be an odd prime power which is not a prime. Let $g$ be a primitive root of $\F_{q^2}$. We pick a proper subfield $K$ of $\F_q$. Let $t$ be the dimension of $\F_q$ as a $K$-vector space. Consider the following $K$-subspace of $\F_{q^2}$:
$$C=\bigoplus_{j=0}^{t-1} g^j K.$$
Then $|C|=q$ and $C\setminus \{0\}$ is the union of $\frac{q-1}{|K|-1}$ $K^*$-cosets; in particular, $C\setminus \{0\}$ is contained in the union $S$ of \emph{exactly} $\frac{q-1}{|K|-1} \leq \frac{q-1}{2}$ many distinct $\F_q^*$-cosets because $g^i\F_q^* \neq g^j \F_q^*$ for $0 \leq i<j \leq t-1$ and $t\leq \log_2(q)$. 

The Peisert-type graph $X=\operatorname{Cay}(\F_{q^2}^+, S)$ contains $C$ as a maximum clique; note that $C$ is indeed closed under subtraction, that is, $C-C=C\subset S\cup \{0\}$ because $C$ is a subspace. We claim that $C$ is a non-canonical clique in $X$. Assuming that $C$ is canonical, it must be the image of the subfield $\F_q$ under a linear transformation; since $g\in C$ and $|C|=q$, this would imply that $C=g\F_q$ which is a contradiction as $1\in C$ but $g^{-1}\notin \F_q$. Consequently, $X$ does not have the strict-EKR property.

In the case when $q$ is a square, the above construction could be simplified by taking $K$ to be $\F_{\sqrt{q}}$. The resulting clique is $C=\F_{\sqrt{q}} \oplus g \F_{\sqrt{q}}$ inside a Peisert-type graph $X=\operatorname{Cay}(\F_{q^2}^+,S)$, where the connection set $S$ is the union of exactly $\frac{q-1}{|K|-1}=\sqrt{q}+1$ many $\F_q^{\ast}$-cosets. Thus, the corresponding Peisert-type graph $X$ is of type $(\sqrt{q}+1, q)$ and contains $C$ as a non-canonical clique. This shows that the ``$q>(m-1)^2$'' condition in Corollary~\ref{cor: EKRmsmall} is sharp when $q$ is a square. 
\end{proof}

\subsection{Detailed analysis of a counterexample of small size}\label{subsect:counterexamples-detailed-analysis}
We proceed to give a detailed analysis of a Peisert-type graph with order $81$ which fails to satisfy the strict-EKR property. 

We consider the generalized Peisert graphs $GP^{\ast}(3^4,3^2+1)=GP^{\ast}(81,10)$. While the connection set of the graph depends on the choice of the primitive root, the isomorphism class is unique.
Let $a$ be a primitive root of $\mathbb{F}_{81}$ satisfying $a^4-a^3-1=0$. Recall that 
$$
GP^*(81,10)=\operatorname{Cay}\big(\F_{81}^+, \F_9^* \cup a\F_9^* \cup a^2 \F_9^* \cup a^3 \F_9^* \cup a^4 \F_9^*\big).
$$
Using SageMath, we see that the graph $GP^{\ast}(81,10)$ has clique number $9$, and has $9$ maximum cliques containing $0$. The $5$ canonical cliques containing $0$ can be read from the connection set:
\begin{align*}
    \mathbb{F}_9,  a\mathbb{F}_9, a^2\mathbb{F}_9, a^3\mathbb{F}_9, a^4\mathbb{F}_9.
\end{align*}
There are exactly $4$ non-canonical cliques containing $0$. These are:
\begin{align*}
    C_1 = \langle 1, a^3\rangle, \ \ \ \ \ & C_2 =\langle a, a^{10}\rangle,\\ 
    C_3 = \langle a^{11}, a^{20}\rangle, \ \ \ \ \ & 
    C_4 =\langle a^{30}, a^{33} \rangle,
\end{align*}
where $\langle a^i, a^j\rangle$ stands for the $\F_3$-subspace generated by $a^i$ and $a^j$.

Let $v_1, v_2, ..., v_{40}$ be the balanced characteristic of the canonical cliques \emph{not} containing $0$. By Proposition~\ref{prop:basis-eigenspace}, these $40=\frac{81-1}{2}$ vectors form a basis for the eigenspace $W$ of $GP^*(81,10)$ corresponding to the eigenvalue $q-m=9-5=4$. The following table lists these $40$ canonical cliques of $GP^*(81,10)$. The entry $(s, t)$ is a shorthand for the canonical clique $s\F_{9}+t$.  

\begin{table}[ht!]
\begin{tabular}{ |c|c|c|c|c| } 
 \hline 
$(1, a)$ & $(a, a^2)$  & $(a^2, 1)$  &  $(a^3, a)$ &  $(a^4, a)$ \\
 \hline 
$(1, 2a)$ &  $(a, 2a^2)$ & $(a^2, 2)$  &  $(a^3, 2a)$  &  $(a^4, 2a)$ \\
 \hline 
\cellcolor[HTML]{C0C0C0}\ $(1, a^2)$ & \cellcolor[HTML]{C0C0C0}\ $(a, 1)$  &  $(a^2, a)$ & \cellcolor[HTML]{C0C0C0}\ $(a^3,1)$ & \cellcolor[HTML]{C0C0C0}\ $(a^4,1)$
 \\
  \hline 
\cellcolor[HTML]{C0C0C0}\ $(1, 2a^2)$ & \cellcolor[HTML]{C0C0C0}\ $(a, 2)$ & $(a^2, 2a)$ & \cellcolor[HTML]{C0C0C0}\ $(a^3,2)$ &  \cellcolor[HTML]{C0C0C0}\ $(a^4,2)$ \\
 \hline 
\cellcolor[HTML]{C0C0C0}\ $(1, a^2+a)$ & \cellcolor[HTML]{C0C0C0}\ $(a, a^2+1)$ & $(a^2, a+1)$ &  \cellcolor[HTML]{C0C0C0}\ $(a^3,a+1)$ &  \cellcolor[HTML]{C0C0C0}\ $(a^4,a+1)$\\
 \hline 
\cellcolor[HTML]{C0C0C0} $(1, a^2+2a)$ & \cellcolor[HTML]{C0C0C0}\  $(a, a^2+2)$  &  $(a^2, a+2)$ &  \cellcolor[HTML]{C0C0C0}\ $(a^3,a+2)$  & \cellcolor[HTML]{C0C0C0}\ $(a^4,a+2)$ \\
 \hline 
\cellcolor[HTML]{C0C0C0}\ $(1, 2a^2+a)$ & \cellcolor[HTML]{C0C0C0}\ $(a, 2a^2+1)$ & $(a^2, 2a+1)$ & \cellcolor[HTML]{C0C0C0}\ $(a^3,2a+1)$ & \cellcolor[HTML]{C0C0C0}\ $(a^4,2a+1)$ \\
 \hline 
\cellcolor[HTML]{C0C0C0}\ $(1, 2a^2+2a)$ & \cellcolor[HTML]{C0C0C0}\ $(a, 2a^2+2)$  &  $(a^2, 2a+2)$ & \cellcolor[HTML]{C0C0C0}\ $(a^3, 2a+2)$ & \cellcolor[HTML]{C0C0C0}\ $(a^4,2a+2)$ \\
 \hline 
\end{tabular}
\end{table}

Let $v$ be the balanced characteristic function of the non-canonical clique $C_2 = \langle a, a^{10} \rangle$. We can express $v = b_1 v_1 + b_2 v_2 + ... + b_{40} v_{40}$ for some $b_i\in\mathbb{Q}$. Using SageMath, we found that among these 40 coefficients, $16$ of them are $0$ and $24$ of them are $-1/3$. The value of $b_i$ is $-1/3$ if the corresponding cell is shaded grey, and it is $0$ if the cell is unshaded. In other words, $v$ has the ``distribution" given by $(0^{16}; (-1/3)^{24})$.

Note that there are $36$ non-canonical cliques in $GP^{\ast}(81,10)$. Using SageMath, we found that exactly $4$ non-canonical cliques have the distribution $(0^{16}; (-1/3)^{24})$, and these are precisely the $4$ non-canonical cliques containing $0$. The remaining $32$ non-canonical cliques have the distribution equal to $(0^{25}; (-1/3)^{6}; (1/3)^{9})$.

\section{Open problems}\label{sect:open-problems}
We end the paper by presenting two open problems. 

Inspired by our example in Section~\ref{subsect:counterexamples-detailed-analysis}, it is natural to investigate how the coefficients for the balanced characteristic function of non-canonical cliques vary in a given Peisert-type graph. 

\begin{question}
Let $X$ be a Peisert-type graph of type $(m, q)$. Fix a basis of the $(q-m)$-eigenspace of $X$ according to Proposition~\ref{prop:basis-eigenspace}. Then do any two non-canonical cliques containing $0$ share the same distribution of coefficients in the given basis? Moreover, do any two non-canonical cliques not containing $0$ share the same distribution of coefficients in the given basis?
\end{question}

Another problem, motivated by the counterexamples found in Theorem~\ref{thm: counterexample}, is the following.

\begin{prob}
Characterize Peisert-type graphs with the strict-EKR property.
\end{prob}

According to Theorem~\ref{thm: PT=OA}, this is a special case of another open problem: characterize orthogonal arrays whose block graphs satisfy the strict-EKR property \cite[Problem 16.4.1]{GM}. We believe that this is an interesting sub-problem to explore.

\section*{Acknowledgments}
During the preparation of this work, S. Asgarli was supported by a postdoctoral research fellowship from the University of British Columbia and NSERC PDF grant. S. Asgarli thanks Zinovy Reichstein for his comments on the manuscript. S. Goryainov is grateful to Alexander Gavrilyuk and Alexandr Valyuzhenich for valuable discussions concerning the paper. H. Lin is supported by National Natural Science Foundation of China (Nos. 11771141 and 12011530064). C. H. Yip is supported by a Four Year Doctoral Fellowship from the University of British Columbia. C. H. Yip thanks Shaun Fallat for the clarification of a theorem in \cite{AFMNSR21}. The authors are also grateful to the referees for valuable comments, corrections, and suggestions.

\bibliographystyle{abbrv}
\bibliography{biblio}

\end{document}